\DeclareMathSymbol{\subsetneqq}{\mathbin}{AMSb}{36}
\newcommand{\R}{\mathbb{R}}
\newcommand{\N}{\mathbb{N}}
\newcommand{\C}{\mathbb{C}}
\newcommand{\beq}{\begin{eqnarray}}
\newcommand{\eeq}{\end{eqnarray}}
\newcommand{\bq}{\begin{equation}}
\newcommand{\eq}{\end{equation}}
\newcommand{\beqn}{\begin{eqnarray*}}
\newcommand{\eeqn}{\end{eqnarray*}}
\newcommand{\bex}{\begin{exo}}
\newcommand{\eex}{\end{exo}}
\newcommand{\ben}{\begin{enumerate}}
\newcommand{\een}{\end{enumerate}}
\newtheorem{th1}{{\bf Theorem}}[section]
\newtheorem{thm}[th1]{{\bf Theorem}}
\newtheorem{lem}[th1]{{\bf Lemma}}
\newtheorem{prop}[th1]{{\bf Proposition}}
\newtheorem{defi}[th1]{\bf Definition}
\author[ T. Saanouni]{ T. Saanouni}
\address{University Tunis El Manar,
Faculty of Sciences of Tunis, LR03ES04 partial differential equations and applications, 2092 Tunis, Tunisia.}
\email{\sl Tarek.saanouni@ipeiem.rnu.tn}
\subjclass{35Q55}
\keywords{Nonlinear Schr\"odinger system, global well-posedness, scattering.}
\title[coupled NLS]{On defocusing coupled nonlinear Schr\"odinger equations}
\date{\today}
\begin{document}
\begin{abstract}
The initial value problem for some defocusing coupled nonlinear Schr\"odinger equations is investigated. Global well-posedness and scattering are established.
\end{abstract}
\maketitle
\tableofcontents
\vspace{ 1\baselineskip}
\renewcommand{\theequation}{\thesection.\arabic{equation}}
\section{Introduction}
We consider the Cauchy problem for a defocusign Schr\"odinger system with power-type nonlinearities
\begin{equation}
\left\{
\begin{array}{ll}
i\dot u_j +\Delta u_j=\displaystyle\sum_{k=1}^{m}a_{jk}|u_k|^p|u_j|^{p-2}u_j ;\\
u_j(0,x)= \psi_{j}(x),
\label{S}
\end{array}
\right.
\end{equation}
where $u_j: \R \times \R^N \to \C$ for $j\in[1,m]$ and $a_{jk} =a_{kj}$ are positive real numbers.\\
The m-component coupled nonlinear Schr\"odinger system with power-type nonlinearities arises in many physical problems. This models physical systems in which the field has more than one component. For example, in optical fibers and waveguides, the propagating electric field has two components that are transverse to the direction of propagation.  When $m = 2$, this system also arises in the Hartree-Fock theory for a double condensate; i.e., a binary mixture of Bose-Einstein condensatesin in two different hyperfine states. Readers are referred to various other works \cite{Hasegawa,Zakharov} for the derivation and applications of this system.\\
A solution ${\bf u}:= (u_1,...,u_m)$ to \eqref{S} formally satisfies respectively conservation of the mass and the energy
\begin{gather*}
M(u_j):= \displaystyle\int_{\R^N}|u_j(x,t)|^2\,dx = M(\psi_{j});\\
E({\bf u}(t)):= \frac{1}{2}\displaystyle \sum_{j=1}^{m}\displaystyle\int_{\R^N}\Big(|\nabla u_j|^2+ \frac{1}{p}\displaystyle \sum_{k=1}^{m}a_{jk}\displaystyle  |u_j(x,t)|^p |u_k(x,t)|^p\Big)\,dx = E({\bf u}(0)).
\end{gather*}
Before going further, let us recall some historic facts about the one component case. The model case given by a pure power nonlinearity is of particular interest. The question of well-posedness in the energy space was widely investigated. We denote for $p>1$ the Schr\"odinger problem
$$(NLS)_p\quad i\dot  u+\Delta u\pm u|u|^{p-1}=0,\quad u:{\mathbb R}\times{\mathbb R}^N\rightarrow{\mathbb C}.$$
This equation satisfies a scaling invariance. Indeed, if $u$ is a solution to $(NLS)_p$ with data $u_0$, then
$ u_\lambda:=\lambda^{\frac2{p-1}}u(\lambda^2\, .\,,\lambda\, .\,)$
is a solution to $(NLS)_p$ with data $\lambda^{\frac2{p-1}}u_0(\lambda\,.\,).$
For $s_c:=\frac N2-\frac2{p-1}$, the space $\dot H^{s_c}$ whose norm is invariant under the dilatation $u\mapsto u_{\lambda}$ is relevant in this theory. When $s_c=1$ which is the energy critical case, the critical power is $p_c:=\frac{N+2}{N-2}$, $N\geq 3$. \\
Local well-posedness holds in the energy critical case \cite{Cas.F} and the local existence interval does not depend only on $\|u_0\|_{H^1}$. Then, an iteration of the local well-posedness theory fails to prove global existence. But using new ideas of Bourgain in \cite{J.B1,J.B2} and a new interaction Morawetz inequality \cite{Col.K} the energy critical case of $(NLS)_p$ is now completely resolved \cite{V,RV07}. Finite energy initial data $u_0$ evolve into global solution $u$ with finite spacetime size $\|u\|_{L_{t,x}^{\frac{2(2+N)}{N-2}}}<\infty$ and scatters.\\

In two space dimensions, similar results about global well-posedness and scattering of the Schr\"odinger equation with exponential nonlinearity exist \cite{T,T1,T2,T3}.\\

Intensive work has been done in the last few years, about coupled Schr\"odinger systems \cite{ntds,w,mz}. These works have been mainly on 2-systems or with small couplings. Moreover, most works treat the focusing case by considering the stationary associated problem \cite{AC2,xs,hs,AC3,AC4}. Despite the partial progress made so far, many difficult questions remain open and little is known about m-systems for $m\geq 3$.\\

It is the purpose of this manusrcipt to obtain global well-posedness and scattering of the nonlinear Schr\"odinger coupled nonlinear system \eqref{S}. This note extends \cite{ntds}, where local well-posedness was claimed without giving a proof.\\

The rest of the paper is organized as follows. The next section contains the main results and some technical tools needed in the sequel. Sections three and four are devoted to proving well-posedness of the Sch\"odinger system \eqref{S}. In section five, scattering is established. Finally, we give a proof of Morawetz estimate in appendix.\\
\\
We define the product space
$$H:=H^1(\R^N)\times...\times H^1(\R^N) =[H^1(\R^N)]^m$$
where $H^1(\R^N)$ is the usual Sobolev space endowed with the complete norm
$$ \|u\|_{H^1(\R^N)} := \Big(\|u\|_{L^2(\R^N)}^2 + \|\nabla u\|_{L^2(\R^N)}^2\Big)^\frac12$$
We denote the real numbers
 $$p_*:=1+\frac4N\quad\mbox{ and }\quad p^*:=\left\{
\begin{array}{ll}
\frac{N}{N-2}\quad\mbox{if}\quad N>2;\\
\infty\quad\mbox{if}\quad N=2.
\end{array}
\right.$$
We mention that $C$ will denote a
constant which may vary
from line to line and if $A$ and $B$ are nonnegative real numbers, $A\lesssim B$  means that $A\leq CB$. For $1\leq r\leq\infty$ and $(s,T)\in [1,\infty)\times (0,\infty)$, we denote the Lebesgue space $L^r:=L^r({\mathbb R}^N)$ with the usual norm $\|\,.\,\|_r:=\|\,.\,\|_{L^r}$, $\|\,.\,\|:=\|\,.\,\|_2$ and
$$\|u\|_{L_T^s(L^r)}:=\Big(\int_{0}^{T}\|u(t)\|_r^s\,dt\Big)^{\frac{1}{s}},\quad \|u\|_{L^s(L^r)}:=\Big(\int_{0}^{+\infty}\|u(t)\|_r^s\,dt\Big)^{\frac{1}{s}}.$$
For simplicity, we denote the usual Sobolev Space $W^{s,p}:=W^{s,p}({\mathbb R}^N)$ and $H^s:=W^{s,2}$. If $X$ is an abstract space $C_T(X):=C([0,T],X)$ stands for the set of continuous functions valued in $X$, moreover for an eventual solution to \eqref{S}, we denote $T^*>0$ it's lifespan.

\section{Main results and background}
In what follows, we give the main results and some estimates needed in the sequel.
\subsection{Main results}
First, local well-posedness of the Schr\"odinger problem \eqref{S} is claimed.
\begin{thm}\label{existence}
Let $2\leq N\leq 4$ and $ \Psi \in H$. Assume that $ 1< p \leq p^*$ if $3\leq N\leq4$ and $ 1< p< p^*$ if $N=2$. Then, there exist $T^*>0$ and a unique maximal solution to \eqref{S},
$$ {\bf u} \in C ([0, T^*), H).$$ Moreover,
\begin{enumerate}
\item ${\bf u}\in \big(L^{\frac{4p}{N(p-1)}}([0, T^*], W^{2,2p})\big)^{(m)};$
\item ${\bf u}$ satisfies conservation of the energy and the mass;
\item $T^*=\infty$ in the subcritical case $(1<p<p^*)$.
\end{enumerate}
\end{thm}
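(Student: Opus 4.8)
The plan is to use a standard fixed-point argument on the Duhamel formulation, combined with Strichartz estimates adapted to the $H^1$ setting. Write \eqref{S} in integral form,
\[
u_j(t) = e^{it\Delta}\psi_j - i\int_0^t e^{i(t-s)\Delta}F_j({\bf u}(s))\,ds,\qquad F_j({\bf u}) := \sum_{k=1}^m a_{jk}|u_k|^p|u_j|^{p-2}u_j,
\]
and set up the contraction in a ball of the space
\[
X_T := \bigcap_{(q,r)\text{ admissible}} L_T^q(W^{1,r}),
\]
with particular attention to the pair appearing in item (1), namely $L_T^{\frac{4p}{N(p-1)}}(W^{2,2p})$ — actually $(q,r)=\big(\tfrac{4p}{N(p-1)},2p\big)$ is the Schr\"odinger-admissible pair naturally attached to the exponent $2p$, and the $W^{2,2p}$ regularity (rather than $W^{1,2p}$) is what one gets because the nonlinearity is smooth enough when $p\ge 2$; for $1<p<2$ one works at the $W^{1,r}$ level instead. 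The first step is to record the relevant Strichartz estimates (presumably stated in the background subsection I am allowed to cite) and to verify that the admissibility constraints are met precisely under the hypothesis $2\le N\le 4$, $1<p\le p^*$ (with strict inequality when $N=2$, since $p^*=\infty$ there and one needs a finite exponent).

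Next I would estimate the nonlinear term. The key algebraic point is the pointwise bound $|F_j({\bf u})|\lesssim \big(\sum_k |u_k|\big)^{2p-1}$ and the Leibniz/fractional-chain-rule bound on $\nabla F_j$ (or $\Delta F_j$ when $p\ge 2$), which reduces everything to controlling a product of $2p-1$ factors in $L_T^q L^r$ norms. Using H\"older in space to distribute the $2p-1$ copies among one $W^{1,r}$ (or $W^{2,r}$) factor and $2p-2$ copies in a high Lebesgue exponent, then Sobolev embedding $H^1\hookrightarrow L^{r'}$ for the admissible range $2\le r'\le \frac{2N}{N-2}$ (which is exactly where the constraint $p\le p^* = \frac{N}{N-2}$ enters, guaranteeing $|u|^{2p-2}\in L^{N/2}$-type spaces after embedding), and finally H\"older in time, one obtains
\[
\|F_j({\bf u})\|_{L_T^{q'}(W^{1,r'})} \lesssim T^{\theta}\, \|{\bf u}\|_{X_T}^{2p-1}
\]
for some $\theta>0$ in the subcritical case (and $\theta=0$ with a smallness-of-the-tail argument in the critical case $p=p^*$, $N\ge 3$). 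A parallel difference estimate gives the contraction. Choosing $T$ small (subcritical) or the Strichartz norm of $e^{it\Delta}\psi_j$ small on a suitable interval (critical) closes the fixed point and yields local existence and uniqueness in $C_T(H)$, together with the auxiliary regularity in item (1); a standard continuation argument produces the maximal $T^*$ and the blow-up alternative.

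For item (2), conservation of mass and energy, the honest route is to prove the identities first for regularized data (e.g. $\psi_j^{(n)}\in H^2$, for which the solution is $H^2$ and the formal computations multiplying the equation by $\bar u_j$ and by $\dot{\bar u}_j$ and integrating are rigorous), and then pass to the limit using continuous dependence in $H$; alternatively one cites a standard persistence-of-regularity result. For item (3), global existence in the strictly subcritical range, the point is that the conserved energy controls $\|\nabla u_j(t)\|_{L^2}$ — here the \emph{defocusing} sign $a_{jk}>0$ is essential, since then every term in $E({\bf u})$ is nonnegative, whence $\|\nabla u_j(t)\|_{L^2}^2 \le 2E({\bf u}(0))$ — and mass conservation controls $\|u_j(t)\|_{L^2}$; together these give an a priori bound on $\|{\bf u}(t)\|_H$ uniform in $t$, which via the blow-up alternative forces $T^*=\infty$. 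The main obstacle I anticipate is purely bookkeeping: verifying that the H\"older exponents in the nonlinear estimate are simultaneously admissible and Sobolev-compatible across the whole stated range of $(N,p)$, and in particular handling the two sub-regimes $p\ge 2$ (work in $W^{2,2p}$, nonlinearity is $C^1$) versus $1<p<2$ (work in $W^{1,r}$, nonlinearity is merely H\"older, so difference estimates need the $|a|^{p-1}$ type inequalities) without the constants blowing up; the critical endpoint $p=p^*$ for $N=3,4$ also requires the extra care of a small-data-on-small-interval argument rather than the $T^\theta$ smallness, though for the global statement (3) only the strictly subcritical case is asserted, so that difficulty can be sidestepped there.
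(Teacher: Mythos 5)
Your proposal follows essentially the same route as the paper: a contraction mapping for the Duhamel formulation in the Strichartz space built on the admissible pair $\big(\tfrac{4p}{N(p-1)},2p\big)$ at the $W^{1,2p}$ level, with the nonlinearity handled by pointwise/mean-value bounds, H\"older, Sobolev embedding and a factor $T^{\theta}$, $\theta>0$, in the subcritical range, followed by the conservation laws and an a priori $H$-bound (equivalently, a local existence time depending only on the conserved quantities) to get $T^*=\infty$. Your observation that the natural regularity for item (1) is $W^{1,2p}$ rather than the stated $W^{2,2p}$ is consistent with the paper's actual argument, which only ever estimates first derivatives, and your remark that the critical endpoint $p=p^*$ needs a separate small-data treatment matches the paper, which defers that case to a later section.
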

In the critical case, global existence for small data holds in the energy space.
\begin{thm}\label{glb}
Let $2<N\leq4$ and $p=p^*.$ There exists $\epsilon_0>0$ such that if $\Psi:=(\psi_1,...,\psi_m) \in H$ satisfies $ \displaystyle \sum_{j=1}^m\displaystyle\int_{\R^N}|\nabla \psi_j|^2\,dx\leq \epsilon_0$, the system \eqref{S} possesses a unique solution ${\bf u}\in C(\R, H)$.
\end{thm}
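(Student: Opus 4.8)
The plan is to solve \eqref{S} globally in time by a single contraction mapping argument, in a function space dictated by the energy-critical scaling, using the smallness of $\sum_{j}\|\nabla\psi_{j}\|^{2}$ to close the estimates on all of $\R$ at once. Observe that $p=p^{*}=\frac{N}{N-2}$ makes the total degree of the nonlinearity equal to $2p-1=\frac{N+2}{N-2}$, so each coupling term $|u_{k}|^{p}|u_{j}|^{p-2}u_{j}$ is of the energy-critical type $|{\bf u}|^{\frac{4}{N-2}}{\bf u}$; in particular $\dot H^{1}$ is scaling invariant while the mass $L^{2}$ is subcritical with respect to it. Since $N\in\{3,4\}$ the endpoint Strichartz pair $\big(2,\frac{2N}{N-2}\big)$ is admissible, so I work in
\[
X:=\Big\{{\bf u}=(u_{1},\dots,u_{m}):\ \|{\bf u}\|_{X}:=\sum_{j=1}^{m}\Big(\|\nabla u_{j}\|_{L^{\infty}(\R,L^{2})}+\|\nabla u_{j}\|_{L^{2}(\R,L^{\frac{2N}{N-2}})}\Big)<\infty\Big\},
\]
with the Duhamel map $\Phi({\bf u})_{j}(t):=e^{it\Delta}\psi_{j}-i\int_{0}^{t}e^{i(t-s)\Delta}N_{j}({\bf u}(s))\,ds$, where $N_{j}({\bf u}):=\sum_{k=1}^{m}a_{jk}|u_{k}|^{p}|u_{j}|^{p-2}u_{j}$.

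First, applying the global-in-time Strichartz estimates with the admissible pairs $(\infty,2)$ and $\big(2,\frac{2N}{N-2}\big)$, dual pair $\big(2,\frac{2N}{N+2}\big)$, and commuting $\nabla$ with $e^{it\Delta}$, one gets
\[
\|\Phi({\bf u})\|_{X}\ \lesssim\ \sum_{j=1}^{m}\|\nabla\psi_{j}\|\ +\ \sum_{j=1}^{m}\big\|\nabla N_{j}({\bf u})\big\|_{L^{2}(\R,L^{\frac{2N}{N+2}})}.
\]
The heart of the matter is the nonlinear estimate. Since $2<N\le4$ forces $2p-2=\frac{4}{N-2}\ge2$, the nonlinearities are at least $C^{1}$ in $(u,\bar u)$ and the ordinary Leibniz rule gives the pointwise bound $|\nabla N_{j}({\bf u})|\lesssim\sum_{k}(|u_{k}|+|u_{j}|)^{2p-2}(|\nabla u_{k}|+|\nabla u_{j}|)$, with constants depending on the $a_{jk}$. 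Hölder in $x$ with the split $\frac{N+2}{2N}=\frac{2}{N}+\frac{N-2}{2N}$, then Hölder in $t$ putting the $2p-2$ zero-order factors in $L^{\infty}_{t}$ and the gradient factor in $L^{2}_{t}$, and finally the Sobolev embedding $\dot H^{1}(\R^{N})\hookrightarrow L^{\frac{2N}{N-2}}(\R^{N})$ (valid since $N>2$), yield $\big\|\nabla N_{j}({\bf u})\big\|_{L^{2}(\R,L^{\frac{2N}{N+2}})}\lesssim\|{\bf u}\|_{X}^{2p-1}$. Hence $\|\Phi({\bf u})\|_{X}\le C_{0}\sqrt{\epsilon_{0}}+C_{1}\|{\bf u}\|_{X}^{2p-1}$ under the hypothesis $\sum_{j}\|\nabla\psi_{j}\|^{2}\le\epsilon_{0}$, and the same computation on $\Phi({\bf u})-\Phi({\bf v})$ (no fractional calculus is needed since $2p-2\ge2$) gives $\|\Phi({\bf u})-\Phi({\bf v})\|_{X}\le C_{1}(\|{\bf u}\|_{X}+\|{\bf v}\|_{X})^{2p-2}\|{\bf u}-{\bf v}\|_{X}$. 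Taking $\epsilon_{0}$ so small that $R:=2C_{0}\sqrt{\epsilon_{0}}$ satisfies $C_{1}R^{2p-2}\le\frac12$ makes $\Phi$ a contraction of the closed ball $\{\|{\bf u}\|_{X}\le R\}$ into itself; its fixed point ${\bf u}$ is a global solution of \eqref{S} with $\nabla u_{j}\in C(\R,L^{2})$, and uniqueness in the natural (Strichartz) class follows by a routine continuation argument.

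It then remains to upgrade ${\bf u}\in X$ to ${\bf u}\in C(\R,H)$, i.e.\ to control the $L^{2}$ component, which is \emph{not} delivered by the smallness hypothesis (the mass being supercritical relative to it) but by conservation of mass: writing $\bar u_{j}N_{j}({\bf u})=\sum_{k}a_{jk}|u_{k}|^{p}|u_{j}|^{p}\in\R$ shows $\frac{d}{dt}\|u_{j}(t)\|^{2}=0$ formally, and this is made rigorous exactly as in the local theory of Theorem~\ref{existence} (with which the present solution agrees, by uniqueness), or by approximating $\psi_{j}$ with Schwartz functions of $\dot H^{1}$-norm still below the threshold and passing to the limit. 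Either way $\|u_{j}(t)\|=\|\psi_{j}\|$ for all $t\in\R$, so ${\bf u}\in C(\R,H)$.

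The step I expect to be the main obstacle is, precisely, the correct set-up: because the problem is energy-critical one cannot iterate the subcritical local theory, so the fixed point must be organized around the \emph{homogeneous} endpoint Strichartz pair — for which the free evolution $e^{it\Delta}\psi_{j}$ does have small norm, being controlled by $\|\nabla\psi_{j}\|$ — and the mass recovered only afterwards; by comparison, the $m$-component coupling produces only harmless finite sums $\sum_{j,k}a_{jk}(\cdots)$ and the Leibniz bound on $|u_{k}|^{p}|u_{j}|^{p-2}u_{j}$ is entirely elementary for $N\in\{3,4\}$.
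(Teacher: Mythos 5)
Your argument is correct in outline and reaches the stated conclusion, but it follows a genuinely different route from the paper. You run a single global-in-time contraction in the scaling-critical space built on the endpoint Strichartz pair $\big(2,\tfrac{2N}{N-2}\big)$, so that the smallness of $\sum_j\|\nabla\psi_j\|^2$ alone closes the estimates on all of $\R$ at once, and the conservation laws enter only a posteriori, to recover the $L^2$ component. The paper instead first proves a conditional critical local theory (Proposition \ref{proposition 1}): a solution exists on any interval $I$ on which the free evolution is small in the diagonal norm $S(I)=L^{2(N+2)/(N-2)}_{t,x}$, with bounds in the gradient norm $M(I)$; globality is then obtained by showing, via conservation of energy, Sobolev's inequality and the absorption Lemma \ref{Bootstrap}, that $\|{\bf u}(t)\|_{\dot H}$ stays small for all time, which keeps the hypothesis of the local theory verified and allows continuation. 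Your version is more self-contained (no continuation argument, no use of the defocusing sign, and it delivers the global spacetime bound --- hence scattering of the small-data solutions --- for free), while the paper's separation into a conditional local theory plus an energy/bootstrap continuation is the structure one would need for a large-data defocusing argument, and Proposition \ref{proposition 1} also carries the stability statement used elsewhere. The one place where your write-up is thinner than it should be is the recovery of the $L^2$ component: invoking mass conservation for the fixed-point solution itself requires either an approximation argument or an identification with the local $H^1$ solution of Theorem \ref{existence} together with a persistence-of-regularity step; it is cleaner to simply add $\|u_j\|_{L^\infty(\R,L^2)}+\|u_j\|_{L^2(\R,L^{2N/(N-2)})}$ to your norm, since the same H\"older split gives $\|N_j({\bf u})\|_{L^2(\R,L^{2N/(N+2)})}\lesssim\|{\bf u}\|_{L^2(\R,L^{2N/(N-2)})}\,\|\nabla{\bf u}\|_{L^\infty(\R,L^2)}^{2p-2}$, so these norms close with the same small factor and yield ${\bf u}\in C(\R,H)$ directly.
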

Second, the system \eqref{S} scatters in the energy space. Indeed, we show that every global solution of \eqref{S} is asymptotic, as $t\to\pm\infty,$ to a solution of the associated linear Schr\"odinger system.
\begin{thm}\label{t2}
Let $2\leq N\leq 4$ and $ p_*< p< p^*.$ Take ${\bf u}\in C(\R, H),$ a global solution to \eqref{S}. Then
$${\bf u}\in \big(L^{\frac{4p}{N(p - 1)}}(\R, W^{1, 2p})\big)^{(m)}$$
and there exists $\Psi:=(\psi_1,...,\psi_m)\in H$ such that
$$\lim_{t\longrightarrow\pm\infty}\|{\bf u}(t)-(e^{it\Delta}\psi_1,...,e^{it\Delta}\psi_m)\|_{H}=0.$$
\end{thm}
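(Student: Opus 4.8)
The plan is to prove Theorem \ref{t2} by the standard scattering argument: first establish that any global energy solution has finite global Strichartz norm, then use this to upgrade to asymptotic completeness. The key input is the interaction Morawetz estimate (proved in the appendix, which I may assume is available), which gives an a priori bound of the form
\[
\sum_{j=1}^m \|u_j\|_{L^4(\R, L^{\frac{2N}{N-1}})}^4 \lesssim \sup_{t}\|{\bf u}(t)\|_{L^2}^3 \, \sup_t \|\nabla {\bf u}(t)\|_{L^2} < \infty,
\]
where the right-hand side is controlled by conservation of mass and energy (note the nonlinearity is defocusing, so the energy controls $\|\nabla u_j\|_{L^2}$). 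The point of the range $p_* < p < p^*$ is that the exponent $\frac{2(p-1)}{p}$ interpolating the Morawetz level with the trivial $L^\infty_t L^2$ bound lands strictly between mass-critical and energy-critical, so one can distribute the nonlinearity on a finite time interval into a small piece (measured in the Morawetz/Strichartz norm) times a bounded piece.

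First I would partition $\R$ into finitely many subintervals $I_\ell$ on each of which the relevant Morawetz-type norm of ${\bf u}$ is smaller than a threshold $\eta$ determined by the local well-posedness constants; finiteness of the number of subintervals follows from the global Morawetz bound. On each $I_\ell$, using the Strichartz estimates from Section 2 together with the fractional Leibniz/chain rule applied to $|u_k|^p|u_j|^{p-2}u_j$, I would run the same fixed-point/bootstrap estimate as in the proof of Theorem \ref{existence}, but now with the smallness coming from the Morawetz norm rather than from shrinking the interval, to conclude ${\bf u}\in (L^{\frac{4p}{N(p-1)}}(I_\ell, W^{1,2p}))^{(m)}$ with a uniform bound. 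Summing over the finitely many $\ell$ gives ${\bf u}\in (L^{\frac{4p}{N(p-1)}}(\R, W^{1,2p}))^{(m)}$, which is the first assertion.

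Given the finite global Strichartz norm, asymptotic completeness is routine: define $\psi_j := \psi_{j,+} = \psi_j(0) - i\int_0^{\infty} e^{-is\Delta}\big(\sum_k a_{jk}|u_k|^p|u_j|^{p-2}u_j\big)(s)\,ds$, show the integral converges in $H^1$ by the dual Strichartz estimate applied to the tail $\int_t^\infty$, and then estimate $\|u_j(t) - e^{it\Delta}\psi_{j,+}\|_{H^1} = \|\int_t^\infty e^{i(t-s)\Delta} F_j(s)\,ds\|_{H^1} \to 0$ as $t\to+\infty$ by the same tail estimate; the $t\to-\infty$ case is symmetric. The only care needed here is that the nonlinearity, after applying one derivative, still closes in the admissible pair $\big(\frac{4p}{N(p-1)}, 2p\big)$ and its dual — this is exactly where the constraint $N\le 4$ and $p<p^*$ enters, ensuring $2p$ is an admissible Lebesgue exponent and the Hölder bookkeeping balances.

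The main obstacle I anticipate is the nonlinear estimate for the coupled, non-polynomial term $|u_k|^p|u_j|^{p-2}u_j$: for non-integer $p$ one cannot simply expand, so one must invoke the fractional chain rule (Christ–Weinstein) to handle $\nabla(|u_k|^p|u_j|^{p-2}u_j)$, and one must track the cross-terms where the derivative falls on $u_k$ versus on $u_j$, keeping the Hölder exponents within the Strichartz-admissible window; controlling these uniformly across the finitely many subintervals, with the smallness genuinely extracted from the Morawetz norm (and not merely from interval length, which would not sum), is the delicate point. Everything else — Strichartz estimates, Morawetz bound, conservation laws — is imported from the earlier sections and the appendix.
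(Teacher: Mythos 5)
Your strategy is sound and would yield the theorem, but it follows a genuinely different route from the paper's for the key step (the global spacetime bound). You propose the ``small intervals'' scheme: treat the interaction Morawetz bound as a global a priori spacetime norm, split $\R$ into finitely many subintervals on which that norm is below a threshold, close a Strichartz bootstrap on each, and sum. The paper instead proves a pointwise-in-time decay lemma, $\|{\bf u}(t)\|_{L^r}\to 0$ for $2<r<\tfrac{2N}{N-2}$ (Lemma \ref{t1}), by a compactness/contradiction argument: if the $L^{2+4/N}$ norm did not decay, Gagliardo--Nirenberg, Rellich and a perturbation claim would produce disjoint time windows $[t_n,t_n+T]$ on which mass concentrates in unit cubes, making the Morawetz quantities \eqref{mrwtz1}--\eqref{mrwtz3} diverge. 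This decay is then inserted into the nonlinear estimate of the first lemma of Section 5, applied on the half-line $(t,\infty)$, and the absorption Lemma \ref{Bootstrap} closes the global $L^{\frac{4p}{N(p-1)}}(\R,W^{1,2p})$ bound in one stroke, with no interval decomposition. Your route is more quantitative (no compactness, constants depending only on mass and energy) and is the standard modern one; the paper's route sidesteps the interpolation bookkeeping needed to convert the Morawetz norm into smallness of a Strichartz-type norm on subintervals. That bookkeeping is the one place where your plan needs real care: the bound you quote, $\sum_j\|u_j\|_{L^4(\R,L^{2N/(N-1)})}^4\lesssim \sup_t\|{\bf u}\|_{L^2}^3\sup_t\|\nabla{\bf u}\|_{L^2}$, is not what the appendix actually provides --- for $N\ge 4$ it gives the weighted bilinear quantity \eqref{mrwtz1} and for $N=2$ the bound \eqref{mrwtz3} --- so for those dimensions an extra step (Hardy--Littlewood--Sobolev or interpolation against the energy) is required before your partition can even be defined. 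The final asymptotic-completeness step (Cauchy criterion for $e^{-it\Delta}{\bf u}(t)$ via the dual Strichartz tail estimate) is identical in both arguments.
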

In the next subsection, we give some standard estimates needed in the paper.
\subsection{Tools}
We start with some properties of the free Schr\"odinger kernel.
\begin{prop}\label{fre}
Denoting the free operator associated to the fractional Schr\"odinger equation
$$e^{it\Delta}u_0:=\mathcal F^{-1}(e^{-it|y|^2})*u_0,$$
yields
\begin{enumerate}
\item
$e^{it\Delta}u_0$ is the solution to the linear problem associated to $(NLS)_p$;
\item
$e^{it\Delta}u_0 \pm i\int_0^te^{i(t-s)\Delta}u|u|^{p-1}\,ds$ is the solution to the problem $(NLS)_p$;
\item
$(e^{it\Delta})^*=e^{-it\Delta}$;
\item
$e^{it\Delta}$ is an isometry of $L^2$.
\end{enumerate}
\end{prop}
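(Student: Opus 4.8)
The plan is to transfer all four assertions to the Fourier side, where, by the convolution theorem, the operator $e^{it\Delta}$ acts simply as multiplication by the unimodular symbol $e^{-it|\xi|^2}$, i.e. $\widehat{e^{it\Delta}u_0}(t,\xi)=e^{-it|\xi|^2}\widehat{u_0}(\xi)$. Once this multiplier description is in hand, (3) and (4) are immediate from Plancherel, (1) is a one-line computation, and (2) is Duhamel's formula, obtained from (1) by the variation-of-constants argument.

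For item (1), set $v(t):=e^{it\Delta}u_0$, so that $\hat v(t,\xi)=e^{-it|\xi|^2}\hat u_0(\xi)$. Differentiating in $t$ gives $\partial_t\hat v=-i|\xi|^2\hat v$, while $\widehat{\Delta v}=-|\xi|^2\hat v$; hence $i\partial_t\hat v+\widehat{\Delta v}=0$, and inverting the Fourier transform yields $i\dot v+\Delta v=0$, with $v(0)=u_0$ since $e^{0}=1$. This also records the identities $\partial_t e^{it\Delta}=i\Delta e^{it\Delta}$ and $\partial_t e^{-it\Delta}=-i\Delta e^{-it\Delta}$, which drive item (2): writing $w(t):=e^{it\Delta}u_0\pm i\int_0^te^{i(t-s)\Delta}(u|u|^{p-1})(s)\,ds$ and differentiating, item (1) handles the free part $e^{it\Delta}u_0$, while for the integral term the fundamental theorem of calculus produces the boundary contribution $\pm i\,(u|u|^{p-1})(t)$ and the $t$-derivative inside the integral contributes $i\Delta$ applied to the Duhamel term; collecting terms gives $i\dot w+\Delta w=\mp\,u|u|^{p-1}$, i.e. $w$ solves $(NLS)_p$. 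Conversely, if $u$ solves $(NLS)_p$ then $\frac{d}{dt}\big(e^{-it\Delta}u(t)\big)=e^{-it\Delta}\big(-i\Delta u+\dot u\big)=\pm i\,e^{-it\Delta}(u|u|^{p-1})$, and integrating from $0$ to $t$ and applying $e^{it\Delta}$ recovers exactly the stated formula.

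For items (3) and (4), by Plancherel, for $f,g\in L^2$,
$$\langle e^{it\Delta}f,g\rangle=\int_{\R^N}e^{-it|\xi|^2}\hat f(\xi)\,\overline{\hat g(\xi)}\,d\xi=\int_{\R^N}\hat f(\xi)\,\overline{e^{it|\xi|^2}\hat g(\xi)}\,d\xi=\langle f,e^{-it\Delta}g\rangle ,$$
giving $(e^{it\Delta})^*=e^{-it\Delta}$; and, since $|e^{-it|\xi|^2}|=1$,
$$\|e^{it\Delta}f\|^2=\int_{\R^N}|e^{-it|\xi|^2}|^2\,|\hat f(\xi)|^2\,d\xi=\|\hat f\|^2=\|f\|^2,$$
so $e^{it\Delta}$ is an $L^2$-isometry.

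There is no genuine difficulty here; the only points needing a word of care are technical: the convolution kernel $\mathcal F^{-1}(e^{-it|y|^2})$ is a tempered distribution rather than an $L^1$ function, so the formula $e^{it\Delta}u_0=\mathcal F^{-1}(e^{-it|y|^2})*u_0$ should be read as the Fourier multiplier $\mathcal F^{-1}\big(e^{-it|\xi|^2}\hat u_0\big)$; and the termwise $t$-differentiation in (1) and (2) is licit once one either restricts first to Schwartz data (where $|\xi|^2\hat u_0$ is integrable and differentiation under the integral is justified) and then passes to the limit in $L^2$ or $H^1$ using the uniform bound from (4), or else interprets all the differential identities in $C([0,T],H^{-1})$ from the outset.
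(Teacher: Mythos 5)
Your proof is correct, and it is the canonical argument: conjugating by the Fourier transform turns $e^{it\Delta}$ into multiplication by the unimodular symbol $e^{-it|\xi|^2}$, from which (1) follows by differentiating, (2) is Duhamel's variation-of-constants formula (your sign bookkeeping $i\dot w+\Delta w=\mp u|u|^{p-1}$ matches the paper's convention for $(NLS)_p$), and (3)--(4) follow from Plancherel. The paper itself gives no proof of this proposition --- it is quoted as standard background on the free Schr\"odinger group --- so there is no alternative argument to compare against; your remarks on interpreting the kernel as a tempered distribution and on justifying the $t$-differentiation are exactly the right technical caveats.
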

Now, we give the so-called Strichartz estimate \cite{Cas}.
\begin{defi}
A pair $(q,r)$ of positive real numbers is said to be admissible if
$$2\leq q,r\leq \infty,\quad (q,r,N) \neq(2, \infty, 2)\quad \mbox{and} \quad \frac{2}{q} = N\Big(\frac{1}{2} - \frac{1}{r}\Big).$$
\end{defi}
\begin{prop}Let two admissible pairs $(q,r),\, (a,b)$ and $T>0.$ Then, there exists a positive real number $C$ such that
\begin{equation}\label{S1}
\|u\|_{L_T^q(L^{r})}\leq C \Big( \|u_0\| + \|i\dot u+ \Delta u \|_{L_T^{ a^\prime}(L^{b^\prime})}\Big).
\end{equation}
\end{prop}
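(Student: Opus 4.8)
The plan is to derive the inhomogeneous Strichartz estimate \eqref{S1} from the dispersive estimate for the free propagator combined with a duality/$TT^*$ argument, following the classical scheme of Strichartz, Ginibre--Velo and Keel--Tao. First I would record the two basic facts supplied by Proposition \ref{fre}: the group $e^{it\Delta}$ is an isometry on $L^2$ (point (4)), and it obeys the dispersive bound $\|e^{it\Delta}f\|_{L^\infty}\lesssim |t|^{-N/2}\|f\|_{L^1}$, which follows from the explicit convolution kernel $\mathcal F^{-1}(e^{-it|y|^2})$. Interpolating these two endpoints gives, for every $2\le r\le\infty$, the pointwise-in-time decay $\|e^{it\Delta}f\|_{L^r}\lesssim |t|^{-N(\frac12-\frac1r)}\|f\|_{L^{r'}}$. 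This is the only analytic input; everything after is functional analysis.

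Next I would prove the homogeneous estimate $\|e^{it\Delta}u_0\|_{L^q_T(L^r)}\lesssim\|u_0\|$ for an admissible pair $(q,r)$. Set $T u_0(t):=e^{it\Delta}u_0$; then $TT^*$ acts as $(TT^*F)(t)=\int_{\R}e^{i(t-s)\Delta}F(s)\,ds$, and boundedness of $T:L^2\to L^q_t(L^r)$ is equivalent to boundedness of $TT^*:L^{q'}_t(L^{r'})\to L^q_t(L^r)$. The latter follows from the decay estimate above together with the Hardy--Littlewood--Sobolev inequality in the time variable, precisely because the admissibility relation $\frac2q=N(\frac12-\frac1r)$ makes the time-kernel $|t-s|^{-N(\frac12-\frac1r)}$ exactly of fractional-integration type with the correct homogeneity; the non-endpoint cases are elementary and the delicate endpoint $(q,r)=(2,\frac{2N}{N-2})$ is handled by the Keel--Tao bilinear interpolation argument (and is excluded in dimension $N=2$ by the hypothesis $(q,r,N)\neq(2,\infty,2)$). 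By duality this simultaneously yields $\big\|\int_0^T e^{-is\Delta}F(s)\,ds\big\|\lesssim\|F\|_{L^{a'}_T(L^{b'})}$ for any admissible $(a,b)$.

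Finally I would combine the pieces via the Duhamel representation. Writing $f:=i\dot u+\Delta u$, one has $u(t)=e^{it\Delta}u(0)-i\int_0^t e^{i(t-s)\Delta}f(s)\,ds$. Applying the homogeneous estimate to the first term bounds it by $C\|u_0\|$ (with $u_0:=u(0)$). For the Duhamel term, I would invoke the Christ--Kiselev lemma to pass from the untruncated operator $\int_{\R}$ — controlled by composing the homogeneous estimate for $(q,r)$ with its dual form for $(a,b)$ — to the retarded operator $\int_0^t$, which is legitimate since $q>a'$ away from the double endpoint (the full double-endpoint case again being covered by the Keel--Tao machinery). This gives $\big\|\int_0^t e^{i(t-s)\Delta}f(s)\,ds\big\|_{L^q_T(L^r)}\lesssim\|f\|_{L^{a'}_T(L^{b'})}$, and adding the two contributions produces \eqref{S1}. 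I expect the main obstacle to be the rigorous treatment of the endpoint $q=2$: the Hardy--Littlewood--Sobolev step degenerates there, so one must replace it with a Littlewood--Paley decomposition and a bilinear summation argument, and correspondingly the Christ--Kiselev lemma no longer applies, forcing a direct verification of the retarded estimate at the endpoint.
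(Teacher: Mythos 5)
The paper does not prove this proposition at all: it is quoted as a known result with a pointer to Cazenave's lecture notes \cite{Cas}, so there is no in-paper argument to measure your proposal against. Your outline is the standard and correct proof of the Strichartz estimates and is essentially what one finds in the cited references: the $|t|^{-N/2}$ dispersive bound from the explicit kernel of $e^{it\Delta}$, interpolation with the $L^2$ isometry, the $TT^*$ reduction combined with Hardy--Littlewood--Sobolev in time for non-endpoint admissible pairs (the admissibility relation $\frac{2}{q}=N(\frac12-\frac1r)$ being exactly what makes the time kernel of fractional-integration type), the Keel--Tao argument for the endpoint $q=2$ when $N\geq 3$, and the Christ--Kiselev lemma to pass from the untruncated to the retarded Duhamel operator when $a^\prime<q$, which covers all cases except the double endpoint $a=q=2$. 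Two minor points you could make explicit: the constant $C$ can be taken independent of $T$ because all the estimates are proved on $\R$ and then restricted to $[0,T]$; and $\|u_0\|$ in the statement should be read as $\|u(0)\|_{L^2}$, with the identity $u(t)=e^{it\Delta}u(0)-i\int_0^t e^{i(t-s)\Delta}(i\dot u+\Delta u)(s)\,ds$ justified for the class of solutions considered (e.g.\ by density from smooth data). With those caveats your plan is complete and correct.
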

The following Morawetz estimate is essential in proving scattering \cite{cgt}.  
\begin{prop}\label{prop2''}
Let $2\leq N\leq4$, $1<p\leq p^*$ and ${\bf u}\in C(I,H)$ the solution to \eqref{S}. Then,
\begin{enumerate}
\item
if $N\geq4$,
\begin{equation}\label{mrwtz1}
\sum_{j=1}^m\int_I\int_{\R^N\times\R^N}\frac{|u_j(t,x)|^2|u_j(t,y)|^2}{|x-y|^3}dxdydt\lesssim_u1;
\end{equation}
\item
if $N=3$,
\begin{equation}\label{mrwtz2}
\sum_{j=1}^m\int_I\int_{\R^3}|u_j(t,x)|^4dxdt\lesssim_u1.\end{equation}
\item
if $N=2$,
\begin{equation}\label{mrwtz3}
\sum_{j=1}^m\int_I\|u_j(t)\|_{L^8(\R^2)}^4dt\lesssim_u1.\end{equation}
\end{enumerate}
\end{prop}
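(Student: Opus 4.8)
\smallskip
\noindent\emph{Proof sketch.} The argument is an interaction (tensorized) Morawetz estimate adapted to the coupled system \eqref{S}, carried out with the weight $a(x)=|x|$ when $N\geq 3$ and replaced, when $N=2$, by the two-dimensional correlation estimate of \cite{cgt}. By a standard approximation procedure, using the extra regularity ${\bf u}\in\big(L^{\frac{4p}{N(p-1)}}(I,W^{2,2p})\big)^{(m)}$ of Theorem \ref{existence} together with the conservation laws of item (2), it suffices to prove the estimates for smooth, spatially decaying solutions and then pass to the limit.

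\smallskip
\noindent\emph{Case $N\geq 3$.} Write $\rho_j:=|u_j|^2$ and $\vec p_j:=\mathrm{Im}(\overline{u_j}\,\nabla u_j)$, and introduce the functional
\[
 \mathcal M(t):=\sum_{j=1}^m\int_{\R^N}\int_{\R^N}\rho_j(t,y)\,\nabla a(x-y)\cdot\vec p_j(t,x)\,dx\,dy .
\]
Since $|\nabla a|=1$ and $\|\vec p_j(t)\|_{1}\leq\|u_j(t)\|\,\|\nabla u_j(t)\|$, one gets $|\mathcal M(t)|\leq\sum_j\|u_j(t)\|^3\|\nabla u_j(t)\|$, which by conservation of the mass and the energy satisfies $|\mathcal M(t)|\lesssim_u 1$ uniformly in $t\in I$. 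Differentiating $\mathcal M$ in time, using the continuity equation $\partial_t\rho_j+2\,\nabla\cdot\vec p_j=0$ together with the momentum balance law derived from \eqref{S}, and integrating by parts, one reaches an identity of the schematic form
\[
 \frac{d}{dt}\mathcal M(t)=\sum_{j=1}^m\Big(\tfrac14\iint\rho_j(y)\,(-\Delta^2 a)(x-y)\,\rho_j(x)\,dx\,dy\Big)+\mathcal Q(t)+\mathcal N(t),
\]
where $\mathcal Q(t)\geq0$ gathers the quadratic gradient terms (non-negative because $\nabla^2 a\geq0$ as a matrix and $\Delta a=(N-1)|x|^{-1}>0$), and $\mathcal N(t)$ is the nonlinear contribution. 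The decisive point is that, after summing over $j$ and invoking the defocusing symmetric structure $a_{jk}=a_{kj}>0$, the term $\mathcal N(t)$ equals a non-negative pressure-type quantity $\sum_j\iint\rho_j(y)\,\Delta a(x-y)\,\Pi_j(x)\,dx\,dy$ with $\Pi_j\geq0$, hence may be discarded. Finally, in $\mathcal D'(\R^N)$ one has $-\Delta^2|x|=(N-1)(N-3)|x|^{-3}$ if $N\geq4$ and $-\Delta^2|x|=8\pi\,\delta_0$ if $N=3$, so that the first sum above equals $\tfrac{(N-1)(N-3)}{4}\sum_j\iint\frac{\rho_j(x)\rho_j(y)}{|x-y|^3}$, respectively $2\pi\sum_j\int_{\R^3}|u_j|^4$. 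Integrating over $I$, dropping $\mathcal Q+\mathcal N\geq0$, and using $\big|\mathcal M(\sup I)-\mathcal M(\inf I)\big|\leq 2\sup_{t\in I}|\mathcal M(t)|\lesssim_u 1$, one obtains \eqref{mrwtz1} and \eqref{mrwtz2}.

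\smallskip
\noindent\emph{Case $N=2$.} Here $-\Delta^2|x|=-|x|^{-3}\leq0$, so $a(x)=|x|$ gives the wrong sign; instead one works with the tensor product $v_j(t,x,y):=u_j(t,x)u_j(t,y)$, which solves on $\R^2\times\R^2$ a Schr\"odinger equation with gauge-invariant nonlinearity that, again by $a_{jk}>0$, contributes with a favorable sign. The correlation estimate of \cite{cgt} then gives
\[
 \sum_{j=1}^m\int_I\big\|\,|\nabla|^{1/2}\big(|u_j(t)|^2\big)\big\|_{L^2(\R^2)}^2\,dt\ \lesssim\ \sum_{j=1}^m\|u_j\|_{L^\infty(I,L^2)}^3\,\|u_j\|_{L^\infty(I,\dot H^{1/2})}\ \lesssim_u 1,
\]
the last bound following from mass and energy conservation together with $\|u_j\|_{\dot H^{1/2}}\lesssim\|u_j\|^{1/2}\|\nabla u_j\|^{1/2}$. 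Since $\dot H^{1/2}(\R^2)\hookrightarrow L^4(\R^2)$, the left-hand side controls $\sum_j\big\|\,|u_j|^2\big\|_{L^2(I,L^4(\R^2))}^2=\sum_j\|u_j\|_{L^4(I,L^8(\R^2))}^4$, which is \eqref{mrwtz3}.

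\smallskip
\noindent The main obstacle is the time-differentiation of $\mathcal M$ in the case $N\geq3$: one must perform the full and somewhat lengthy integration by parts for the coupled system and verify that \emph{every} term generated by $\sum_k a_{jk}|u_k|^p|u_j|^{p-2}u_j$ reassembles, after summing in $j$, into the single non-negative pressure term $\mathcal N(t)$ above — this is exactly where the hypotheses $a_{jk}=a_{kj}>0$ enter — and that all formal manipulations are justified at the regularity furnished by Theorem \ref{existence}.
\endproof
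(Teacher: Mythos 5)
Your argument is the same interaction (tensorized) Morawetz estimate of Colliander--Grillakis--Tzirakis that the paper uses in its appendix: your bilinear functional $\mathcal M(t)=\sum_j\iint\rho_j(y)\,\nabla a(x-y)\cdot\vec p_j(x)\,dx\,dy$ is, up to a constant, exactly the paper's Morawetz action $M_a^{\otimes_2}$ for the tensor product ${\bf u}\otimes{\bf u}$ with weight $a(z)=|x-y|$, the sign analysis of the quadratic and nonlinear (pressure) terms via $a_{jk}=a_{kj}>0$ and the distributional identities for $-\Delta^2|x|$ in $N=3,4$ are the same, and your $N=2$ case invokes the same correlation estimate from \cite{cgt} to which the paper defers. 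The proof is correct at the same (schematic) level of detail as the paper's own.
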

For the the reader convenience, a proof following as in \cite{cgt,cks}, is given in appendix. Let us gather some useful Sobolev embeddings \cite{AC1}.
\begin{prop}\label{injection}
The continuous injections hold
\begin{enumerate}
\item $ W^{s,p}(\R^N)\hookrightarrow L^q(\R^N)$ whenever
$1<p<q<\infty, \quad s>0\quad \mbox{and}\quad \frac{1}{p}\leq \frac{1}{q} + \frac {s}{N};$
\item $W^{s,p_1}(\R^N)\hookrightarrow W^{s - N(\frac{1}{p_1} - \frac{1}{p_2}),p_2}(\R^N)$ whenever $1\leq p_1\leq p_2 \leq \infty.$
\end{enumerate}
\end{prop}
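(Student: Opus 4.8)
The plan is to identify $W^{s,p}$ with the Bessel potential space: writing $G_s$ for the convolution kernel of $(I-\Delta)^{-s/2}$, each $u\in W^{s,p}$ is $u=G_s*f$ with $f:=(I-\Delta)^{s/2}u\in L^p$ and $\|u\|_{W^{s,p}}=\|f\|_p$. Everything then reduces to mapping properties of convolution with $G_s$, which I would extract from the two standard facts about the Bessel kernel (we may assume $0<s<N$, since larger $s$ is easier, $G_s$ then being bounded): near the origin $G_s(x)\simeq|x|^{s-N}$, and for $|x|\ge 1$ the kernel $G_s$ is smooth with exponential decay. Consequently $\chi_{\{|x|\le 1\}}G_s\lesssim I_s$ pointwise, with $I_s$ the Riesz kernel, while $\chi_{\{|x|>1\}}G_s\in L^r(\R^N)$ for every $1\le r\le\infty$.

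For part (1) I would distinguish the subcritical and critical ranges of the hypothesis $\frac1p\le\frac1q+\frac sN$. When the inequality is strict, $G_s\in L^r$ with $\frac1r=1+\frac1q-\frac1p$: the assumption $q>p$ forces $r\ge 1$, and strictness forces $r<\frac{N}{N-s}$, so Young's inequality $\|G_s*f\|_q\le\|G_s\|_r\|f\|_p$ closes the estimate. In the critical case $\frac1p=\frac1q+\frac sN$, Young just fails, so I would split $G_s=\chi_{\{|x|\le 1\}}G_s+\chi_{\{|x|>1\}}G_s$: the singular part is handled by the Hardy--Littlewood--Sobolev inequality for the Riesz potential $I_s\colon L^p\to L^q$, valid precisely under this exponent relation for $1<p<q<\infty$, while the smooth tail lies in $L^r$ with $\frac1r=1+\frac1q-\frac1p\in(0,1)$ and is absorbed by Young.

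For part (2), set $\sigma:=N\big(\frac1{p_1}-\frac1{p_2}\big)\ge 0$ and $t:=s-\sigma$. With $g:=(I-\Delta)^{s/2}u\in L^{p_1}$ one has $(I-\Delta)^{t/2}u=G_\sigma*g$, so the desired bound $\|u\|_{W^{t,p_2}}\lesssim\|u\|_{W^{s,p_1}}$ is exactly $\|G_\sigma*g\|_{p_2}\lesssim\|g\|_{p_1}$ with $\frac1{p_2}=\frac1{p_1}-\frac\sigma N$. The case $\sigma=0$, that is $p_1=p_2$, is trivial; for $0<\sigma<N$ and $1<p_1\le p_2<\infty$ this is again the critical mapping property of the Bessel potential, proved by the same split---Hardy--Littlewood--Sobolev on the singular part of $G_\sigma$ and Young on its exponentially decaying tail.

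I expect the one genuinely nonelementary ingredient to be the Hardy--Littlewood--Sobolev inequality, on which both the equality case of (1) and the entire content of (2) rest; the soft Young estimate covers only the strictly subcritical range. The most delicate points are the extreme Lebesgue exponents $p_1=1$ and $p_2=\infty$ permitted in (2): there the relevant kernel sits exactly on the boundary of integrability (for $p_1=1$ one has $G_\sigma\in L^r$ for $r<p_2$ but not at $r=p_2$, and symmetrically when $p_2=\infty$), Hardy--Littlewood--Sobolev degenerates, and one must argue from the kernel bounds directly, through the weak-type endpoint rather than the clean $L^p$ statement. This borderline behaviour is the crux of the proof.
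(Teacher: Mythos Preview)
The paper does not give a proof of this proposition; it simply records the statement and cites Adams' \emph{Sobolev Spaces}. There is therefore no argument to compare yours against.

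Your approach---identifying $W^{s,p}$ with the Bessel potential space, writing $u=G_s*f$, and then using Young's inequality in the strictly subcritical range and Hardy--Littlewood--Sobolev at the critical exponent---is the standard textbook route and is correct throughout the open range $1<p,p_1,p_2<\infty$. Your split of $G_s$ into a Riesz-like singular part and an exponentially decaying tail is exactly how one reduces to HLS.

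The caution you raise about the endpoints $p_1=1$ and $p_2=\infty$ in part~(2) is well placed, but the difficulty there is not merely technical. For Bessel potential spaces the critical mapping $G_\sigma\colon L^1\to L^{p_2}$ with $\tfrac1{p_2}=1-\tfrac{\sigma}{N}$ genuinely fails as a strong-type bound (one only has weak $L^{p_2}$), and likewise at $p_2=\infty$; so the ``weak-type endpoint'' you invoke does not recover the stated strong embedding. In other words, the statement as written is imprecise at those boundary exponents, and no amount of kernel estimation will close that gap---this is a defect of the proposition's formulation rather than of your proof. Since the paper never uses those endpoint cases, the imprecision is harmless for its purposes.
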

We close this subsection with some absorption result \cite{Tao}.
\begin{lem} \label{Bootstrap}
 Let $T>0$ and $X\in C([0,  T], \R_+)$ such that
$$ X\leq a + b X^{\theta}\quad on \quad [0,T],$$
where  $a,\, b>0,\, \theta>1,\, a<(1 - \frac{1}{\theta})\frac{1}{(\theta b)^{\frac{1}{\theta}}}$ and $X(0)\leq \frac{1}{(\theta b)^{\frac{1}{\theta -1}}}.$ Then
$$X\leq \frac{\theta}{\theta - 1}a\quad on \quad [0, T].$$
\end{lem}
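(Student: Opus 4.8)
The plan is to read the hypothesis $X\le a+bX^\theta$ as the pointwise algebraic constraint $\phi(X(t))\ge 0$ for every $t\in[0,T]$, where
$$\phi(x):=a+bx^\theta-x,\qquad x\ge 0,$$
and then to run a connectedness (continuity) argument driven by the shape of $\phi$. First I would analyze $\phi$ by elementary calculus. Since $\phi'(x)=b\theta x^{\theta-1}-1$ vanishes only at $x_0:=(b\theta)^{-1/(\theta-1)}$, the function $\phi$ decreases on $[0,x_0]$ and increases on $[x_0,\infty)$, with $\phi(0)=a>0$ and $\phi(x)\to\infty$ as $x\to\infty$. Using $bx_0^\theta=x_0/\theta$ one computes the minimum value $\phi(x_0)=a-(1-\tfrac1\theta)x_0$.

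The key quantitative step is to check that the smallness assumption on $a$ forces $\phi(x_0)<0$; then $\phi$ has exactly two positive zeros $x_-<x_0<x_+$, and
$$\{x\ge 0:\ \phi(x)\ge 0\}=[0,x_-]\cup[x_+,\infty).$$
Here I expect the natural threshold to be $a<(1-\tfrac1\theta)x_0=(1-\tfrac1\theta)(\theta b)^{-1/(\theta-1)}$, that is, the exponent governing $a$ should be $1/(\theta-1)$, and I would double-check this against the constant as stated.

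Next comes the topological heart of the argument. Writing $\mathcal G:=\{t:\ X(t)\le x_-\}$ and $\mathcal B:=\{t:\ X(t)\ge x_+\}$, the constraint $\phi(X(t))\ge 0$ gives $[0,T]=\mathcal G\cup\mathcal B$, a disjoint union of two sets that are closed by continuity of $X$. Since $[0,T]$ is connected, one of them is empty. To see that $\mathcal B=\emptyset$ I would use the initial condition: from $X(0)\le x_0$ together with $\phi(X(0))\ge 0$, and the monotonicity fact that on $[0,x_0]$ the inequality $\phi\ge 0$ holds exactly on $[0,x_-]$, it follows that $X(0)\le x_-$, so $0\in\mathcal G$ and hence $\mathcal G=[0,T]$. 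Thus $X\le x_-$ throughout $[0,T]$.

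Finally I would convert $X\le x_-$ into the stated bound by showing $x_-\le\frac{\theta}{\theta-1}a$. Setting $c:=\frac{\theta}{\theta-1}a$, and using that $\phi$ is decreasing on $[0,x_0]$, it suffices to verify $\phi(c)\le 0$ and $c\le x_0$; the inequality $\phi(c)\le 0$ reduces, after substituting $c$ and simplifying, to precisely the smallness condition on $a$, and in fact $c=x_0$ exactly at the threshold. I expect the main obstacle to be purely bookkeeping: aligning the three algebraic conditions — that the minimum is negative, that $X(0)$ sits on the lower branch $[0,x_-]$, and that $x_-\le\frac{\theta}{\theta-1}a$ — so that all three follow from the single smallness hypothesis on $a$ combined with $X(0)\le x_0$. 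The connectedness step itself is routine once these are in place.
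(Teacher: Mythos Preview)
The paper does not actually prove this lemma; it is quoted from Tao's monograph \cite{Tao} without argument, so there is no ``paper's own proof'' to compare against. Your approach is the standard one and is correct: analyze the scalar function $\phi(x)=a+bx^\theta-x$, use the smallness of $a$ to force $\phi(x_0)<0$ at the unique critical point $x_0=(\theta b)^{-1/(\theta-1)}$, split the admissible region $\{\phi\ge 0\}$ into two components, and use connectedness of $[0,T]$ together with $X(0)\le x_0$ to trap $X$ in the lower component $[0,x_-]$; the final bound $x_-\le \frac{\theta}{\theta-1}a$ then follows by evaluating $\phi$ at $\frac{\theta}{\theta-1}a$.

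Your side remark about the exponent is also well taken: the computation of $\phi(x_0)$ gives the threshold $a<(1-\tfrac1\theta)(\theta b)^{-1/(\theta-1)}$, not $(\theta b)^{-1/\theta}$ as printed in the statement. This appears to be a typographical slip in the lemma as stated here; with the corrected exponent all three algebraic checks you list (negativity of the minimum, $X(0)$ on the lower branch, and $x_-\le\frac{\theta}{\theta-1}a$) line up with the single hypothesis on $a$, exactly as you anticipate. For the applications in the paper this discrepancy is harmless, since only a qualitative smallness condition is ever used.
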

\section{Local well-posedness}
This section is devoted to prove Theorem \ref{existence}. The proof contains three steps. First we prove existence of a local solution to \eqref{S}, second we show uniqueness and finally we establish global existence in the subcritical case.
\subsection{Local existence}
We use a standard fixed point argument. For $T>0,$ we denote the space
$$E_T:=\big(C([0,T],H^1)\cap L^{\frac{4p}{N(p-1)}}([0, T], W^{1,2p})\big)^{(m)}$$ endowed with the complete norm
$$\|{\bf u}\|_T:=\displaystyle\sum_{j=1}^m\Big(\|u_j\|_{L_T^\infty(H^1)}+\| u_j\|_{L^{\frac{4p}{N(p-1)}}_T( W^{1,2p})}\Big).$$
Define the function
$$\phi({\bf u})(t) := T(t){\Psi} - i \displaystyle\sum_{k=1}^{m}\displaystyle\int_0^tT(t-s)\big(|u_k|^p|u_1|^{p-2}u_1,...,|u_k|^p|u_m|^{p-2}u_m\big)\,ds,$$
where $T(t){\Psi} := (e^{it\Delta}\psi_{1},...,e^{it\Delta}\psi_{m}).$ We prove the existence of some small $T, R >0$ such that $\phi$ is a contraction on the ball $ B_T(R)$ whith center zero and radius $R.$ Take ${\bf u}, {\bf v}\in E_T$ applying the Strichartz estimate \eqref{S1}, we get
$$\|\phi({\bf u}) - \phi({\bf v})\|_T\lesssim \displaystyle\sum_{j, k=1}^{m}\Big\||u_k|^p|u_j|^{p-2}u_j -  |v_k|^p |v_j|^{p-2}v_j\Big\|_{L^{\frac{4p}{p(4-N) + N}}(W^{1,{\frac{2p}{2p-1}}})}.$$
To derive the contraction, consider the function
$$ f_{j,k}: \C^m\rightarrow \C,\quad (u_1,...,u_m)\mapsto |u_k|^p|u_j|^{p-2}u_j.$$
With the mean value Theorem
$$|f_{j,k}({\bf u})-f_{j,k}({\bf v})|\lesssim\max\{ |u_k|^{p - 1}|u_j|^{p - 1}+{|u_k|^{p}|u_j|^{p-2}}, |v_k|^p|v_j|^{p - 2}+{|v_k|^{p - 1}|v_j|^{p - 1}}\}|{\bf u} - { \bf v}|.$$
Using H\"older inequality, Sobolev embedding and denoting the quantity
$$ (\mathcal{I}):=\| f_{j,k}({\bf u})-f_{j,k}({\bf v})\|_{L_T^{\frac{4p}{p(4-N) + N}}(L^{\frac{2p}{2p-1}})},$$ we compute via a symmetry argument
\begin{eqnarray*}
(\mathcal{I})
&\lesssim &\big\| \big(|u_k|^{p - 1}|u_j|^{p - 1} +|u_k|^p|u_j|^{p - 2}\big)|{\bf u} - { \bf v}|\big\|_{L_T^{\frac{4p}{p(4-N) + N}}(L^{\frac{2p}{2p-1}})} \\
&\lesssim&\|{\bf u} - { \bf v}\|_{L_T^{\frac{4p}{N(p-1)}}(L^{2p})} \big\| |u_k|^{p-1}|u_j|^{p -1} + |u_k|^{p}|u_j|^{p-2}  \big\|_{L_T^{\frac{4p}{4p - 2N(p-1)}}(L^{\frac{p}{p-1}})}\\
&\lesssim&T^{\frac{4p - 2N(p-1)}{4p}} \|{\bf u} - { \bf v}\|_{L_T^{\frac{4p}{N(p-1)}}(L^{2p})}\big\| |u_k|^{p-1}|u_j|^{p-1}
+ |u_k|^{p}|u_j|^{p-2}  \big\|_{L_T^\infty(L^{\frac{p}{p-1}})} \\
&\lesssim& T^{\frac{4p - 2N(p-1)}{4p}} \|{\bf u} - { \bf v}\|_{L_T^{\frac{4p}{N(p-1)}}(L^{2p})}\Big(\|u_k\|_{L_T^\infty(L^{2p})}^{p-1}\|u_j\|_{L_T^\infty(L^{2p})}^{p-1} + \|u_k\|_{L_T^\infty(L^{2p})}^p\|u_j\|_{L_T^\infty(L^{2p})}^{p-2} \Big)\\
&\lesssim& T^{\frac{4p - 2N(p-1)}{4p}} \|{\bf u} - { \bf v}\|_{L_T^{\frac{4p}{N(p-1)}}(L^{2p})}\Big(\|u_k\|_{L_T^\infty(H^1)}^{p-1}\|u_j\|_{L_T^\infty(H^1)}^{p-1}
+ \|u_k\|_{L_T^\infty(H^1)}^p\|u_j\|_{L_T^\infty(H^1)}^{p-2} \Big).
\end{eqnarray*}
Then
\begin{eqnarray*}
\displaystyle\sum_{k,j=1}^m\| f_{j,k}({\bf u})-f_{j,k}({\bf v})\|_{L_T^{\frac{4p}{p(4-N) + N}}(L^{\frac{2p}{2p-1}})}
&\lesssim & T^{\frac{4p - 2N(p-1)}{4p}} R^{2p-2}\|{\bf u} - {\bf v}\|_{T}.
\end{eqnarray*}
It remains to estimate the quantity
$$\big\|\nabla \big(f_{j,k}({\bf u}) - f_{j,k}({\bf v})\big)\big\|_{L_T^{\frac{4p}{p(4-N) + N}}(L^{\frac{2p}{2p-1}})}.$$
Write
\begin{eqnarray*}
\partial_i\Big((f_{j,k}({\bf u}) - f_{j,k}({\bf v})\Big)
&=& \Big(\partial_i{u}\partial_i (f_{j,k})({\bf u}) - \partial_i{v}\partial_i(f_{j,k})({\bf v})\Big)\\
& =&\partial_i({ u} - { v})\partial_i(f_{j,k})({\bf u}) +  \partial_i{ v}\Big(\partial_i(f_{j,k})({\bf u}) - \partial_i(f_{j,k})({\bf v})\Big).
\end{eqnarray*}
Thus
\begin{eqnarray*}
\big\|\nabla\Big(f_{j,k}({\bf u}) - f_{j,k}({\bf v})\Big)\big\|_{L_T^{\frac{4p}{p(4-N) + N}}(L^{\frac{2p}{2p-1}})}
&\leq&\big\| \displaystyle\sum_i\partial_i({ u} - { v})\partial_i(f_{j,k})({\bf u})  \big\|_{L_T^{\frac{4p}{p(4-N) + N}}(L^{\frac{2p}{2p-1}})}\\
& +& \big\|  \displaystyle\sum_i \partial_i{ v}\Big(\partial_i(f_{j,k})({\bf u}) - \partial_i(f_{j,k})({\bf v})\Big)\big\|_{L_T^{\frac{4p}{p(4-N) + N}}(L^{\frac{2p}{2p-1}})}\\
&\leq&(\mathcal{I}_1) + (\mathcal{I}_2).
\end{eqnarray*}
Thanks to H\"older inequality and Sobolev embedding, we obtain
\begin{eqnarray*}
(\mathcal{I}_1)
&\lesssim&\|\nabla({\bf u} - {\bf v})\|_{L_T^{\frac{4p}{N(p-1)}}(L^{2p})} \big\| |u_k|^{p-1}|u_j|^{p -1}+ {|u_k|^{p}|u_j|^{p-2}}\big\|_{L_T^{\frac{4p}{4p - 2N(p-1)}}(L^{\frac{p}{p-1}})}\\
&\lesssim&T^{\frac{4p - 2N(p-1)}{4p}} \|\nabla({\bf u} - {\bf v})\|_{L_T^{\frac{4p}{N(p-1)}}(L^{2p})}\big\| |u_k|^{p-1}|u_j|^{p-1} + |u_k|^{p}|u_j|^{p-2}  \big\|_{L_T^\infty(L^{\frac{p}{p-1}})} \\
&\lesssim& T^{\frac{4p - 2N(p-1)}{4p}} \|{\bf u} - {\bf v}\|_T\Big(\|u_k\|_{L_T^\infty(L^{2p})}^{p-1}\|u_j\|_{L_T^\infty(L^{2p})}^{p-1}
+ \|u_k\|_{L_T^\infty(L^{2p})}^p\|u_j\|_{L_T^\infty(L^{2p})}^{p-2} \Big)\\
&\lesssim& T^{\frac{4p - 2N(p-1)}{4p}} \|{\bf u} - {\bf v}\|_T\Big(\|u_k\|_{L_T^\infty(H^1)}^{p-1}\|u_j\|_{L_T^\infty(H^1)}^{p-1}
+ \|u_k\|_{L_T^\infty(H^1)}^p\|u_j\|_{L_T^\infty(H^1)}^{p-2} \Big).
\end{eqnarray*}
With the same way 
{\small \begin{eqnarray*}
(\mathcal{I}_2)
&\lesssim& \| \nabla {\bf v}\|_{L_T^{\frac{4p}{N(p-1)}}(L^{2p})} \| {\bf u} - {\bf v}\|_{L^\infty_T(L^{2p})}\big\||u_k|^{p-2}|u_j|^{p-1}+|u_k|^{p}|u_j|^{p-3}\big\|_{L_T^{\frac{4p}{4p - 2N(p-1)}}(L^{\frac{2p}{2p-3}})}\\
&\lesssim&T^{\frac{4p - 2N(p-1)}{4p}}\| \nabla {\bf v}\|_{L_T^{\frac{4p}{N(p-1)}}(L^{2p})} \| {\bf u} - {\bf v}\|_{L^\infty_T(L^{2p})}\big\||u_k|^{p-2}|u_j|^{p-1} + |u_k|^{p}|u_j|^{p-3}\big\|_{L_T^\infty(L^{\frac{2p}{2p-3}})}\\
&\lesssim&T^{\frac{4p - 2N(p-1)}{4p}}\| \nabla {\bf v}\|_{L_T^{\frac{4p}{N(p-1)}}(L^{2p})} \| {\bf u} - {\bf v}\|_{L^\infty_T(L^{2p})}\Big(\|u_k\|_{L_T^\infty(L^{2p})}^{p-2}\|u_j\|_{L_T^\infty(L^{2p})}^{p-1}
+ \|u_k\|_{L_T^\infty(L^{2p})}^p\|u_j\|_{L_T^\infty(L^{2p})}^{p-3} \Big)\\
&\lesssim&T^{\frac{4p - 2N(p-1)}{4p}}\| \nabla {\bf v}\|_{L_T^{\frac{4p}{N(p-1)}}(L^{2p})} \| {\bf u} - {\bf v}\|_{L^\infty_T({H^1})}\Big(\|u_k\|_{L_T^\infty({H^1})}^{p-2}\|u_j\|_{L_T^\infty({H^1})}^{p-1}
+ \|u_k\|_{L_T^\infty({H^1})}^p\|u_j\|_{L_T^\infty({H^1})}^{p-3} \Big).
\end{eqnarray*}}
Thus, for $T>0$ small enough, $\phi$ is a contraction satisfying
$$\|\phi({\bf u}) - \phi({\bf v})\|_T\lesssim T^{\frac{4p - N(p-1)}{4p}}R^{2p-3}\|{\bf u} - {\bf v}\|_T .$$
Taking in the last inequality ${\bf v}=0,$ yields
\begin{eqnarray*}
\|\phi({\bf u})\|_T
&\lesssim& T^{\frac{4p - N(p-1)}{4p}}R^{2p-2}+ \|\phi(0)\|_T\\
&\lesssim& T^{\frac{4p - N(p-1)}{4p}}R^{2p-2}+ TR .
\end{eqnarray*}
Since $p_*<p\leq p^*$ if $N\in\{3,4\}$ and $p_*<p< p^*$ if $N=2$, $\phi$ is a contraction of $ B_T(R)$ for some $R,T>0$ small enough.
\subsection{Uniqueness}
In what follows, we prove uniqueness of solution to the Cauchy problem \eqref{S}. Let $T>0$ be a positive time, ${\bf u},{\bf v}\in C_T(H)$ two solutions to \eqref{S} and ${\bf w} := {\bf u} - {\bf v}.$ Then
$$i\dot w_j +\Delta w_j = \displaystyle \sum_{k=1}^{m}\big( |u_k|^p|u_j|^{p - 2 }u_j -  |v_k|^p|v_j|^{p - 2 }v_j\big),\quad w_j(0,.)= 0.$$
Applying Strichartz estimate with the admissible pair $(q,r) = (\frac{4p}{N(p-1)}, 2p) $ and denoting for simplicity $L_T^q(L^r)$ the norm of $(L_T^q(L^r))^{(m)}$, we have
\begin{eqnarray*}
\|{\bf u} - {\bf v}\|_{L_T^q(L^r)}\lesssim \displaystyle\sum_{j,k=1}^{m}\big\|f_{j,k}({\bf u}) -  f_{j,k}({\bf v})\big\|_{L_T^{q^\prime}(L^{r^\prime})}.
\end{eqnarray*}
Taking $T>0$ small enough, whith a continuity argument, we may assume that
$$ \max_{j=1,...,m}\|u_j\|_{L_T^\infty(H^1)}\leq 1.$$
Using previous computation with$$ (\mathcal{I}) :=\big\|f_{j,k}({\bf u}) -  f_{j,k}({\bf v})\big\|_{L_T^{q^\prime}(L^{r^\prime})}=  \big\||u_k|^p|u_j|^{p-2}u_j - |v_k|^p|v_j|^{p-2}v_j\big\|_{L_T^{q^\prime}(L^{r^\prime})},$$
we have
\begin{eqnarray*}
(\mathcal{I})&\lesssim&\big\|\Big(|u_k|^{p-1}|u_j|^{p-1}  + |u_k|^p|u_j|^{p-2} \Big)|{\bf u} - {\bf v}|\big\|_{L_T^{\frac{4p}{p(4-N) + N}}(L^{\frac{2p}{2p-1}})}\\
&\lesssim&\|{\bf u} - {\bf v}\|_{L_T^{\frac{4p}{p(4-N) + N}}(L^{2p})}\big\| |u_k|^{p-1}|u_j|^{p-1} + |u_k|^p|u_j|^{p-2} \big\|_{L_T^\infty(L^{\frac{p}{p-1}})}\\
&\lesssim& T^{\frac{(4 - N)p + N}{4 p}}\|{\bf u} - {\bf v}\|_{L_T^{\frac{4p}{N(p - 1)}}(L^{2p})}\Big(\|u_k\|_{L_T^\infty(H^1)}^{p-1} \|u_j\|_{L_T^\infty(H^1)}^{p-1}+ \|u_k\|_{L_T^\infty(H^1)}^{p}\|u_j\|_{L_T^\infty(H^1)}^{p-2}   \Big).
\end{eqnarray*}
Then
$$ \|{\bf w}\|_{L_T^q(L^r)}\lesssim  T^{\frac{(4 - N)p + N}{4 p}}\|{\bf w} \|_{L_T^q(L^r)}.$$
Uniqueness follows for small time and then for all time with a translation argument.
\subsection{Global existence in the subcritical case}
 The global existence is a consequence of energy conservation and previous calculations. Let ${\bf u} \in C([0, T^*), H)$ be the unique maximal solution of \eqref{S}. We prove that ${\bf u}$ is global. By contradiction, suppose that $T^*<\infty.$ Consider for $0< s <T^*,$ the problem
$$(\mathcal{P}_s)
\left\{
\begin{array}{ll}
i\dot v_j +\Delta v_j = \displaystyle \sum_{k,j=1}^{m} |v_k|^p|v_j|^{p - 2 }v_j;\\
v_j(s,.) = u_j(s,.).
\end{array}
\right.
$$
Using the same arguments used in the local existence, we can prove a real $\tau>0$ and a solution ${\bf v} = (v_1,...,v_m)$ to $(\mathcal{P}_s)$ on $C\big([s, s+\tau], H).$ Using the conservation of energy we see that $\tau$ does not depend on $s.$ Thus, if we let $s$ be close to $T^*$ such that $T^*< s + \tau,$ this fact contradicts the maximality of $T^*.$
\section{Global existence in the critical case}
In this section $N\in\{3,4\}$. We establish global existence of a solution to \eqref{S} in the critical case $p=p^*$ for small data as claimed in Theorem \ref{glb}.\\
Several norms have to be considered in the analysis of the critical case. Letting $I\subset \R$ a time slab, we define 
\begin{eqnarray*}
\|u\|_{M(I)} &:= &\|\nabla u\|_{L^{\frac{2(N + 2)}{N-2}}(I, L^{\frac{2N(N + 2)}{N^2  + 4}})};\\
\|u\|_{S(I)}& :=&\| u\|_{L^{\frac{2(N + 2)}{N-2}}(I, L^{\frac{2(N + 2)}{N -2}})}.
\end{eqnarray*}
Let $M(\R)$ be the completion of $C_c^\infty(\R^{N+1})$ endowed with the norm $\|.\|_{M(\R)},$ and $M(I)$ be the set consisting of the restrictions to $I$ of functions in $M(\R).$ An important quantity closely related to the mass and the energy, is the functional $\xi$ defined for ${\bf u}\in H $ by
$$\xi({\bf u}) = \displaystyle \sum_{j=1}^m\displaystyle\int_{\R^N}|\nabla u_j|^2\,dx.$$
We give an auxiliary result.
\begin{prop}\label{proposition 1}
Let $p= p^*$, $\Psi:=(\psi_1,..,\psi_m)\in\dot H:=(\dot H^1)^{(m)}$ and $A:=\|\Psi\|_{\dot H}$. There exists $\delta:=(\delta_A)>0$ such that for any interval $I=[0, T],$ if
$$ \displaystyle\sum_{j=1}^{m}\|e^{it\Delta}\psi_{j}\|_{S(I)}< \delta,$$
then there exits a unique solution ${\bf u}\in C(I, H)$ of \eqref{S} which satisfies ${\bf u}\in \big(M(I)\cap L^{\frac{2(N+2)}{N}}(I\times \R^N)\big)^{(m)}.$ Moreover,
\begin{gather*}
\displaystyle\sum_{j=1}^{m}\|u_j\|_{S(I)}\leq 2\delta.
\end{gather*}
Besides, the solution depends continuously on the initial data in the sense that there exists $\delta_0$ depending on $\delta,$ such that for any $\delta_1\in (0,\delta_0),$ if $\displaystyle\sum_{j=1}^{m}\|\psi_{j} - \varphi_{j}\|_{H^1}\leq \delta_1$ and ${\bf v}$ is the local solution of \eqref{S} with initial data $\varphi:=(\varphi_{1},...,\varphi_{m}),$ then ${\bf v}$ is defined on $I$ and for any admissible couple $(q,r)$,
$$\|{\bf u} - {\bf v}\|_{(L^q(I, L^r)\cap \dot H)^{(m)}}\leq C\delta_1.$$
\end{prop}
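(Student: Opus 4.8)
The plan is to run a contraction-mapping argument in a suitable Strichartz space, exactly parallel to the subcritical local theory of Section 3 but now using the energy-critical admissible pairs encoded in the norms $\|\cdot\|_{M(I)}$ and $\|\cdot\|_{S(I)}$. First I would record the relevant Strichartz and Sobolev facts: the pair associated to $M(I)$ is admissible at the level of $\dot H^1$, so that $\|e^{it\Delta}\psi_j\|_{M(I)}\lesssim A$ and, by the inhomogeneous Strichartz estimate \eqref{S1} together with the embedding from Proposition \ref{injection}, one controls $\|\cdot\|_{M(I)}$ and $\|\cdot\|_{S(I)}$ of the Duhamel term by a dual-Strichartz norm of $\nabla\big(\sum_k |u_k|^{p-1}|u_j|^{p-1}u_j\big)$ (recall $p=p^*=\tfrac{N}{N-2}$, so each nonlinear term is $2p-1$-homogeneous and the total power is energy-critical). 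The fractional Leibniz/product rule reduces this to estimating $\|\nabla u_j\|_{M(I)}$ times factors of the form $\|u_k\|_{S(I)}^{p-1}\|u_j\|_{S(I)}^{p-1}$ plus symmetric terms. Crucially there is no power of $|I|$ to spend here, so smallness must come entirely from the hypothesis $\sum_j \|e^{it\Delta}\psi_j\|_{S(I)}<\delta$.

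Next I would set up the fixed point. Let $X$ be the $m$-fold product of $M(I)\cap L^{\frac{2(N+2)}{N}}(I\times\R^N)$ (the $L^{\frac{2(N+2)}{N}}$ space is the standard bilinear-controlling norm for energy-critical NLS and is dominated by $\|\cdot\|_{M(I)}$ via Sobolev, so it is bookkeeping), and consider the map $\phi$ from the Duhamel formula on the ball $\{{\bf u}:\sum_j\|u_j\|_{S(I)}\le 2\delta,\ \sum_j\|u_j\|_{M(I)}\le 2CA\}$. The linear part satisfies $\sum_j\|e^{it\Delta}\psi_j\|_{S(I)}<\delta$ and $\sum_j\|e^{it\Delta}\psi_j\|_{M(I)}\le CA$ by Strichartz. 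The nonlinear estimates above give, schematically, $\sum_j\|\phi({\bf u})_j\|_{S(I)}\le \delta + C(2\delta)^{2p-2}(2CA)$ and similarly for the $M$-norm, while the difference estimate yields $\|\phi({\bf u})-\phi({\bf v})\|_X\le C\delta^{2p-3}(A+\delta)\|{\bf u}-{\bf v}\|_X$ — here one uses the mean-value-theorem bound on $f_{j,k}$ already displayed in Section 3. Choosing $\delta=\delta_A$ small enough (depending on $A$ through the exponent $2p-2>0$) makes $\phi$ a contraction; the fixed point is the desired solution, and that it lies in $C(I,H)$ follows by feeding it back into Strichartz with the energy-space admissible pair $(\infty,2)$ together with mass conservation, giving $\sum_j\|u_j\|_{S(I)}\le 2\delta$.

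For continuous dependence I would argue by perturbation: given $\varphi$ with $\sum_j\|\psi_j-\varphi_j\|_{H^1}\le\delta_1$, first note $\sum_j\|e^{it\Delta}\varphi_j\|_{S(I)}\le \sum_j\|e^{it\Delta}\psi_j\|_{S(I)}+C\delta_1<\delta$ once $\delta_1<\delta_0$ is small, so the existence part applies on all of $I$ and produces ${\bf v}$ with the same a priori bounds. Then set ${\bf w}={\bf u}-{\bf v}$ and run the difference Strichartz estimate for an arbitrary admissible couple $(q,r)$: $\|{\bf w}\|_{(L^q(I,L^r)\cap\dot H)^{(m)}}\lesssim \|\Psi-\varphi\|_{\dot H}+C\delta^{2p-3}(A+\delta)\,\|{\bf w}\|_X$, and since the constant multiplying $\|{\bf w}\|_X$ is $<1$ by the smallness of $\delta$, one absorbs it (Lemma \ref{Bootstrap} can be invoked here, or simply direct absorption) to conclude $\|{\bf u}-{\bf v}\|_{(L^q(I,L^r)\cap\dot H)^{(m)}}\le C\delta_1$.

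The main obstacle I anticipate is the nonlinear estimate itself: because the problem is energy-critical there is no time factor $|I|^{\theta}$ to absorb constants, so every fractional-derivative estimate on the coupled nonlinearity $\sum_k|u_k|^{p-1}|u_j|^{p-1}u_j$ must be carried out with the precise critical exponents and the smallness must be tracked through the homogeneity $2p-2=\tfrac{4}{N-2}$; one must also be slightly careful that the fractional Leibniz rule is applied with admissible Hölder triples (this is where the $L^{\frac{2(N+2)}{N}}$ auxiliary norm enters) and that when $p=\tfrac{N}{N-2}<2$, i.e. $N=3$ where $p=3$ is fine but for $N=4$, $p=2$, the term $|u_k|^{p}|u_j|^{p-3}$ appearing in the Section-3 mean-value bound is harmless — for $N=4$, $p=2$ makes these borderline exponents integer, which simplifies matters, so I would treat $N=3$ as the genuinely delicate case and handle $N=4$ either in parallel or by the explicit algebraic identity for the cubic nonlinearity.
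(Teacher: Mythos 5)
Your proposal follows essentially the same route as the paper: a contraction-mapping argument on a ball defined by the two critical norms $M(I)$ and $S(I)$, with the $M$-norm controlled by $A=\|\Psi\|_{\dot H}$ via Strichartz, the contraction factor supplied entirely by the smallness hypothesis $\sum_j\|e^{it\Delta}\psi_j\|_{S(I)}<\delta$ (no time factor, as you correctly stress), and the nonlinearity handled through the chain rule, H\"older and Sobolev with the critical exponents. Your treatment of continuous dependence and your caveat about the borderline term $|u_k|^{p}|u_j|^{p-3}$ when $N=4$ are in fact more explicit than the paper, which dispatches these points in one line, but the underlying argument is the same.
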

\begin{proof}
The proposition follows from a contraction mapping argument. For ${\bf u}\in( W(I))^{(m)}$, we let the function
$$\phi({\bf u})(t) := T(t){\Psi} -i \displaystyle\sum_{k =1}^{m}\displaystyle\int_0^tT(t-s)\Big(|u_k|^{\frac{N}{N-2}}|u_1|^{\frac{4-N}{N-2}}u_1,..,|u_k|^{\frac{N}{N-2}}|u_m|^{\frac{4-N}{N-2}}u_m\Big)\,ds.$$
Define $A:=\|\Psi\|_{\dot H}$ and the set
$$ X_{a,b} := \Big\{ {\bf u}\in (M(I))^{(m)};\, \displaystyle\sum_{j=1}^{m}\|u_j\|_{M(I)}\leq a, \, \displaystyle\sum_{j=1}^{m}\|u_j\|_{S(I)}\leq b\Big\}$$
where $a,b>0$ are sufficiently small to fix later. Using Strichartz estimate, we get
\begin{eqnarray*}
\|\phi({\bf u}) - \phi({\bf v})\|_{M(I)}
&\lesssim&\sum_{j,k=1}^{m}\big\|\nabla(f_{j,k}({\bf u}) - f_{j,k}({\bf v}))\big\|_{L^{\frac{2(N+2)}{N+4}}(I,L^{\frac{2(N+2)}{N+4}})}.
\end{eqnarray*}
Write
\begin{eqnarray*}
\partial_i\Big((f_{j,k}({\bf u}) - f_{j,k}({\bf v})\Big)
&=& \Big(\partial_i{u} \partial_i(f_{j,k})({\bf u}) -\partial_i {v}\partial_i(f_{j,k})({\bf v})\Big)\\
& =&\partial_i({ u} - { v})\partial_i(f_{j,k})({\bf u}) +\partial_i { v}\Big(\partial_i(f_{j,k})({\bf u}) -\partial_i (f_{j,k})({\bf v})\Big).
\end{eqnarray*}
Thus
\begin{eqnarray*}
\big\|\nabla\Big(f_{j,k}({\bf u}) - f_{j,k}({\bf v})\Big)\big\|_{L^{\frac{2(N+2)}{N+4}}(I,L^{\frac{2(N+2)}{N+4}})}
&\leq&\big\| \displaystyle\sum_i\partial_i({ u} - { v})\partial_i(f_{j,k})({\bf u})  \big\|_{L^{\frac{2(N+2)}{N+4}}(I,L^{\frac{2(N+2)}{N+4}})}\\
& +& \big\|\displaystyle\sum_i\partial_i { v}\Big(\partial_i(f_{j,k})({\bf u}) - \partial_i(f_{j,k})({\bf v})\Big)\big\|_{L^{\frac{2(N+2)}{N+4}}(I,L^{\frac{2(N+2)}{N+4}})}\\
&\leq&(\mathcal{I}_1) + (\mathcal{I}_2).
\end{eqnarray*}
Using H\"older inequality and Sobolev embedding, yields
\begin{eqnarray*}
(\mathcal{I}_1)
&\lesssim& \big\| |\nabla({\bf u}-{\bf v})| \Big( |u_k|^{\frac{2}{N-2}}|u_j|^{\frac{2}{N-2}} + |u_k|^{\frac{N}{N-2}}|u_j|^{\frac{4-N}{N-2}}\Big)\big\|_{L_T^{2}(L^{\frac{2N}{N+2}})}\\
&\lesssim&\|\nabla({\bf u}-{\bf v})\|_{L_T^{\frac{2(N+2)}{N -2}}(L^{\frac{2N(N+2)}{N^2+4}})}\big\||u_k|^{\frac{2}{N-2}}|u_j|^{\frac{2}{N-2}} + |u_k|^{\frac{N}{N-2}}|u_j|^{\frac{4-N}{N-2}}\big\|_{L_T^{\frac{N+2}2}(L^{\frac{N+2}2})}\\
&\lesssim&\|\nabla({\bf u}-{\bf v})\|_{L_T^{\frac{2(N+2)}{N - 2}}(L^{\frac{2N(N+2)}{N^2+4}})}\Big(\|u_k\|_{L_T^{\frac{2(N+2)}{N- 2}}(L^{\frac{2(N+2)}{N - 2}})}^{\frac{2}{N-2}} \|u_j\|_{L_T^{\frac{2(N+2)}{N - 2}}(L^{\frac{2(N+2)}{N- 2}})}^{\frac{2}{N-2}}\\ &+& \|u_k\| _{L_T^{\frac{2(N+2)}{N - 2}}(L^{\frac{2(N+2)}{N - 2}})} ^{\frac{N}{N-2}}\|u_j\|_{L_T^{\frac{2(N+2)}{N - 2}}(L^{\frac{2(N+2)}{N - 2}})}^{\frac{4-N}{N-2}}\Big)\\
&\lesssim&\|{\bf u}-{\bf v}\|_{(M(I))^{(m)}}\Big(\|u_k\|_{S(I)}^{\frac{2}{N-2}} \|u_j\|_{S(I)}^{\frac{2}{N-2}}+\|u_k\|_{S(I)} ^{\frac{N}{N-2}}\|u_j\|_{S(I)}^{\frac{4-N}{N-2}}\Big)\\
&\lesssim&\|{\bf u}-{\bf v}\|_{(M(I))^{(m)}}\|{\bf u}\|_{(S(I))^m}^{\frac{4}{N-2}}.
\end{eqnarray*}
Using H\"older inequality and Sobolev embedding, yields
\begin{eqnarray*}
(\mathcal{I}_2)
&\lesssim& \big\| |\nabla{\bf u}|({\bf u}-{\bf v})| \Big( |u_k|^{\frac{4-N}{N-2}}|u_j|^{\frac{2}{N-2}} + |u_k|^{\frac{N}{N-2}}|u_j|^{\frac{6-2N}{N-2}}\Big)\big\|_{L_T^{2}(L^{\frac{2N}{N+2}})}\\
&\lesssim&\|\nabla{\bf u}\|_{L_T^{\frac{2(N+2)}{N -2}}(L^{\frac{2N(N+2)}{N^2+4}})}\|{\bf u}-{\bf v}\|_{S(I)^m}\Big(\|u_k\|_{L_T^{\frac{2(N+2)}{N- 2}}(L^{\frac{2(N+2)}{N - 2}})}^{\frac{4-N}{N-2}} \|u_j\|_{L_T^{\frac{2(N+2)}{N - 2}}(L^{\frac{2(N+2)}{N- 2}})}^{\frac{2}{N-2}}\\ &+& \|u_k\| _{L_T^{\frac{2(N+2)}{N - 2}}(L^{\frac{2(N+2)}{N - 2}})} ^{\frac{N}{N-2}}\|u_j\|_{L_T^{\frac{2(N+2)}{N - 2}}(L^{\frac{2(N+2)}{N - 2}})}^{\frac{6-2N}{N-2}}\Big)\\
&\lesssim&\|{\bf u}\|_{(M(I))^{(m)}}\|{\bf u}-{\bf v}\|_{(S(I))^m}\|{\bf u}\|_{(S(I))^m}^{\frac{6-N}{N-2}}.
\end{eqnarray*}
Then
\begin{eqnarray*}
\|\phi({\bf u}) - \phi({\bf v})\|_{(M(I))^{(m)}}
&\lesssim& a^{\frac{4}{N-2}}\|{\bf u-v}\|_{(M(I))^m}+ba^{\frac{6-N}{N-2}}\|{\bf u-v}\|_{(S(I))^m}\\
&\lesssim& (a^{\frac{4}{N-2}}+ba^{\frac{6-N}{N-2}})\|{\bf u-v}\|_{(M(I))^m}.
\end{eqnarray*}
Moreover, taking in the previous inequality ${\bf v=0}$, we get for small $\delta>0$,
\begin{gather*}
\|\phi({\bf u})\|_{(S(I))^{(m)}}\leq\delta+Ca^{\frac{4}{N-2}};\\
\|\phi({\bf u})\|_{(M(I))^{(m)}}\leq CA+Cba^{\frac{4}{N-2}}.
\end{gather*}
With a classical Picard argument, for small $a=2\delta,b>0$, there exists ${\bf u}\in X_{a,b}$ a solution to \eqref{S} satisfying
 $$\|{\bf u}\|_{(S(I))^{(m)}}\leq 2\delta.$$
The rest of the Proposition is a consequence of the fixed point properties.
\end{proof}
We are ready to prove Theorem \ref{glb}.
\begin{proof}[{\bf Proof of Theorem \ref{glb}}]Using the previous proposition via the fact that
$$\|e^{it\Delta}\Psi\|_{S(I)}\lesssim\|e^{it\Delta}\Psi\|_{M(I)}\lesssim\|\Psi\|_{\dot H},$$
it suffices to prove that $\|{\bf u}\|_{\dot H}$ remains small on the whole interval of existence of ${\bf u}.$ 
Write with conservation of the energy and Sobolev's inequality
\begin{eqnarray*}
\|{\bf u}\|_{\dot H}^2&\leq& 2E(\Psi) +\frac{N -2}{N}\displaystyle \sum_{j,k=1}^{m}\displaystyle \int_{\R^N} |u_j(x,t)|^{\frac{N}{N - 2}} |u_k(x,t)|^{\frac{N}{N - 2}}\,dx \\
&\leq& C\big(  \xi(\Psi) + \xi(\Psi)^{\frac{N}{N - 2}}\big) + C \big(\displaystyle\sum_{j=1}^{m}\|\nabla u_j\|_2^2\big)^{\frac{N}{N - 2}}\\
&\leq& C\big(  \xi(\Psi) + \xi(\Psi)^{\frac{N}{N - 2}}\big) +C\|{\bf u}\|_{\dot H} ^{\frac{2N}{N - 2}}.
\end{eqnarray*}
So by Lemma \ref{Bootstrap}, if $\xi(\Psi)$ is sufficiently small, then ${\bf u}$ stays small in the ${\dot H}$ norm.
\end{proof}
\section{Scattering}
In this section, for any time slab $I,$ take the Strichartz space
$$S(I):=C(I, H^1)\cap{L^{\frac{4p}{N(p -1)}}(I, W^{1, 2p})}$$
endowed with the complete norm
$$ \|u\|_{S(I)}:= \|u\|_{L^\infty(I, H^1)} + \|u\|_{L^{\frac{4p}{N(p -1)}}(I, W^{1, 2p})}.$$
The first intermediate result is the following.
\begin{lem}
For any time slab $I,$ we have
$$\| {\bf u}(t) - e^{it\Delta}\Psi\|_{(S(I))^{(m)}}\lesssim\|{\bf u}\|_{L^\infty(I, L^{2p})}^{2p-\frac{4p}{N(p-1)}}\|{\bf u}\|_{L^{\frac{4p}{N(p-1)}}(I, W^{1,2p})}^{\frac{4p}{N(p-1)}-1}.$$
\end{lem}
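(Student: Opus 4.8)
The plan is to estimate the Duhamel term using the Strichartz inequality \eqref{S1} with the admissible pair $(q,r)=\big(\frac{4p}{N(p-1)},2p\big)$, both as the exponent on the left and (in its dual form) controlling the nonlinear source term, exactly as in the local existence proof. First I would write
$$
{\bf u}(t)-e^{it\Delta}\Psi=-i\sum_{k=1}^m\int_0^t T(t-s)\big(|u_k|^p|u_1|^{p-2}u_1,\dots,|u_k|^p|u_m|^{p-2}u_m\big)\,ds,
$$
and apply \eqref{S1} to bound the left-hand side in $L^\infty(I,H^1)\cap L^{\frac{4p}{N(p-1)}}(I,W^{1,2p})$ by $\big\|\nabla f_{j,k}({\bf u})\big\|_{L^{q'}(I,L^{r'})}$ summed over $j,k$, plus the same with no derivative. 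The derivative falls on one factor by the product rule, $\nabla(|u_k|^p|u_j|^{p-2}u_j)$ being pointwise $\lesssim \big(|u_k|^{p-1}|u_j|^{p-1}+|u_k|^p|u_j|^{p-2}\big)|\nabla{\bf u}|$.

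Next I would apply Hölder in space to split $L^{r'}=L^{\frac{2p}{2p-1}}$ into one factor $\|\nabla{\bf u}\|_{L^{2p}}$ (or $\|{\bf u}\|_{L^{2p}}$ for the undifferentiated piece) and the remaining $2p-2$ copies of ${\bf u}$ in $L^{2p}$; then Hölder in time splits $L^{q'}_t$ into $\|\nabla{\bf u}\|_{L^{q}_t L^{2p}_x}$ times a power of $\|{\bf u}\|_{L^\infty_t L^{2p}_x}$. The key bookkeeping is that of the $2p-1$ factors of ${\bf u}$ appearing (one carrying the gradient), exactly $\frac{4p}{N(p-1)}-1$ of them — counting the gradient factor — sit in the space-time norm $L^{\frac{4p}{N(p-1)}}(I,W^{1,2p})$, and the remaining $2p-\frac{4p}{N(p-1)}$ factors sit in $L^\infty(I,L^{2p})$; one checks the time exponents $\tfrac1{q'}=\big(\tfrac{4p}{N(p-1)}-1\big)\tfrac{N(p-1)}{4p}$ close with no leftover power of $|I|$, and the space exponents $\tfrac{2p-1}{2p}=(2p-1)\tfrac1{2p}$ close as well. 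Using $\|{\bf u}\|_{L^\infty(I,L^{2p})}\le\|{\bf u}\|_{L^\infty(I,W^{1,2p})}$ and absorbing all terms, one arrives at the claimed bound
$$
\|{\bf u}(t)-e^{it\Delta}\Psi\|_{(S(I))^{(m)}}\lesssim \|{\bf u}\|_{L^\infty(I,L^{2p})}^{2p-\frac{4p}{N(p-1)}}\,\|{\bf u}\|_{L^{\frac{4p}{N(p-1)}}(I,W^{1,2p})}^{\frac{4p}{N(p-1)}-1}.
$$
The undifferentiated contribution to $\|{\bf u}(t)-e^{it\Delta}\Psi\|_{L^\infty(I,H^1)}$ is handled identically and yields a term of the same or smaller homogeneity, hence is absorbed.

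The main obstacle is purely the exponent arithmetic: one must verify that the Hölder splittings in both time and space are admissible (all exponents in $[1,\infty]$, conjugate relations exact) and that the count of factors landing in each norm produces precisely the exponents $2p-\frac{4p}{N(p-1)}$ and $\frac{4p}{N(p-1)}-1$, with no residual factor of $|I|^{\theta}$ — this is what makes the estimate uniform over arbitrary time slabs $I$, which is exactly what the scattering argument needs. A secondary point is that the Sobolev embedding used to pass from $\|u\|_{L^{p/(p-1)}}$-type norms on products back to $L^{2p}$ (equivalently $W^{1,2p}$) requires $p_*<p<p^*$; this is within the hypotheses inherited from the ambient section, so no new restriction appears.
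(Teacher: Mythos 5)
Your proposal is correct and follows essentially the same route as the paper: Strichartz with the dual of the admissible pair $(\tfrac{4p}{N(p-1)},2p)$, the product rule on $\nabla f_{j,k}$, H\"older in space reducing everything to products of $L^{2p}$ norms, and then a fractional H\"older in time distributing the $2p-1$ factors between $L^\infty_tL^{2p}_x$ and $L^{q}_tL^{2p}_x$ (the paper implements your ``factor count'' by writing $\|u\|_{L^{2p}}^{p}=\|u\|_{L^{2p}}^{p-\mu}\|u\|_{L^{2p}}^{\mu}$ with $\mu=\tfrac{4p-N(p-1)}{2N(p-1)}$, which is exactly your exponent identity $\tfrac1{q'}=(\tfrac{4p}{N(p-1)}-1)\tfrac{N(p-1)}{4p}$). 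The only cosmetic difference is your passing mention of a Sobolev embedding, which is not actually needed here since the space step is pure H\"older.
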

\begin{proof}
Using Strichartz estimate, we have
$$\| {\bf u}(t) - e^{it\Delta}\Psi\|_{(S(I))^{(m)}}\lesssim \displaystyle\sum_{j,k=1}^m \|f_{j,k}({\bf u})\|_{L^{\frac{4p}{p(4 - N) + N}}(I, W^{1,\frac{2p}{2p -1}})}.$$
Thanks to H\"older inequality, we get
\begin{eqnarray*}
\|f_{j,k}({\bf u})\|_{L^\frac{2p}{2p -1}}&\lesssim&\big\||u_k|^p|u_j|^{p - 1}\big\|_{L^\frac{2p}{2p -1}}
\lesssim\|u_k\|_{L^{2p}}^p\|u_j\|_{L^{2p}}^{p -1}.
\end{eqnarray*}
Letting $\mu := \frac{4p - N(p - 1)}{2N(p - 1) }$, we get $p - \mu ={\frac{N(p -1)(2p +1) - 4p}{2N(p -1)}}$ and $ p - 1 -\mu={\frac{N(p -1)(2p -1) - 4p}{2N(p -1)}}$. Moreover,
\begin{eqnarray*}
\|f_{j,k}({\bf u})\|_{L^{\frac{4p}{p(4 - N) + N}}(I, L^{\frac{2p}{2p -1}})}&\lesssim& \big\|\|u_k\|_{L^{2p}}^p\|u_j\|_{L^{2p}}^{p -1} \big\|_{L^{\frac{4p}{p(4 - N) + N}}(I)}\\
&\lesssim&\|u_k\|_{L^\infty(I,L^{2p})}^{p -\mu}\|u_j\|_{L^\infty(I,L^{2p})}^{p -1-\mu}\big\|\|u_k\|_{L^{2p}}^\mu\|u_j\|_{L^{2p}}^{\mu} \big\|_{L^{\frac{4p}{p(4 - N) + N}}(I)}\\
&\lesssim&\|u_k\|_{L^\infty(I, L^{2p})}^{p -\mu}\|u_j\|_{L^\infty(I, L^{2p})}^{p -1-\mu}\|u_k\|_{L^{\frac{4p}{N(p -1)}}(I, L^{2p})}^{\mu}\|u_j\|_{L^{\frac{4p}{N(p -1)}}(I, L^{2p})}^{\mu}.
\end{eqnarray*}
Then,
\begin{eqnarray}
\displaystyle\sum_{j,k=1}^m\|f_{j,k}({\bf u})\|_{L^{\frac{4p}{p(4 - N) + N}}(I, L^{\frac{2p}{2p -1}})}&\lesssim&\displaystyle\sum_{j,k=1}^m\|u_k\|_{L^\infty(I, L^{2p})}^{p -\mu}\|u_j\|_{L^\infty(I, L^{2p})}^{p -1-\mu}\|u_k\|_{L^{\frac{4p}{N(p -1)}}(I, L^{2p})}^{\mu}\|u_j\|_{L^{\frac{4p}{N(p -1)}}(I, L^{2p})}^{\mu}\nonumber\\
&\lesssim&\displaystyle\sum_{k=1}^m\|u_k\|_{L^\infty(I, L^{2p})}^{p -\mu}\|u_k\|_{L^{\frac{4p}{N(p -1)}}(I, L^{2p})}^{\mu}\displaystyle\sum_{j=1}^m\|u_j\|_{L^\infty(I, L^{2p})}^{p -1 - \mu}\|u_j\|_{L^{\frac{4p}{N(p -1)}}(I, L^{2p})}^{\mu}\nonumber\\
&\lesssim&\Big(\displaystyle\sum_{k=1}^m\big(\|u_k\|_{L^\infty(I, L^{2p})}^{p -\mu}\big)^2\Big)^{\frac{1}{2}}\Big(\displaystyle\sum_{k=1}^m\big( \|u_k\|_{L^{\frac{4p}{N(p -1)}}(I, L^{2p})}^{\mu}\big)^2\Big)^{\frac{1}{2}}\nonumber\\
&\times&\Big(\displaystyle\sum_{j=1}^m\big(\|u_j\|_{L^\infty(I, L^{2p})}^{p - 1- \mu}\big)^2\Big)^{\frac{1}{2}}\Big(\displaystyle\sum_{j=1}^m\big( \|u_j\|_{L^{\frac{4p}{N(p -1)}}(I, L^{2p})}^{\mu}\big)^2\Big)^{\frac{1}{2}}\nonumber\\
&\lesssim&\|{\bf u}\|_{L^\infty(I, L^{2p})}^{2p-\frac{4p}{N(p - 1)}}\|{\bf u}\|_{L^{\frac{4p}{N(p -1)}}(I, L^{2p})}^{\frac{4p - N(p - 1)}{N(p -1)}}\label{sct1}.
\end{eqnarray}
It remains to estimate the quantity $(\mathcal{I}):=\|\nabla (f_{j,k}({\bf u}))\|_{{L^{\frac{4p}{p(4 - N) + N}}(I, L^{\frac{2p}{2p -1}})}}.$ 
Using H\"older inequality, we obtain
\begin{eqnarray*}
\|\nabla{\bf u} (f_{j,k})_i({\bf u})\|_{L^{\frac{2p}{2p -1}}}
&\lesssim&\big\| \nabla{\bf u}\big(|u_k|^{p-1}|u_j|^{p-1} + |u_k|^p|u_j|^{p-2}\big)\big\|_{L^{\frac{2p}{2p -1}}}\\
&\lesssim&\|\nabla{\bf u}\|_{(L^{2p})^m}\Big(\big\||u_k|^{p-1}|u_j|^{p - 1}\big\|_{L^\frac{p}{p - 1}}   + \big\||u_k|^{p }|u_j|^{p - 2}\big\|_{L^\frac{p}{p - 1}} \Big)\\
&\lesssim&\|\nabla{\bf u}\|_{(L^{2p})^m}\Big(\|u_k\|_{L^{2p}}^{p-1} \|u_j\|_{L^{2p}}^{p-1} + \|u_k\|_{L^{2p}}^{p}\|u_j\|_{L^{2p}}^{p-2}  \Big).
\end{eqnarray*}
Letting $\theta = :\frac{2p - N(p -1)}{N(p - 1)},$ we get
\begin{eqnarray*}
(\mathcal{I})
 &\lesssim& \Big\|\|\nabla{\bf u}\|_{(L^{2p})^m}\Big(\|u_k\|_{L^{2p}}^{p-1} \|u_j\|_{L^{2p}}^{p-1} + \|u_k\|_{L^{2p}}^{p}\|u_j\|_{L^{2p}}^{p-2}  \Big)\Big\|_{L^{\frac{4p}{p(4 - N) +N}}}\\
&\lesssim&\|\nabla {\bf u}\|_{L^{\frac{4p}{N(p -1)}}(I, L^{2p})}\Big(\big\|\|u_k\|_{L^{2p}}^{p-1} \|u_j\|_{L^{2p}}^{p-1}\big\|_{L^{\frac{4p}{4p - 2N(p - 1)}}} + \big\|\|u_k\|_{L^{2p}}^{p}\|u_j\|_{L^{2p}}^{p-2}\big\|_{L^{\frac{4p}{4p - 2N(p - 1)}}}\Big)\\
&\lesssim&\|\nabla {\bf u}\|_{L^{\frac{4p}{N(p -1)}}(I, L^{2p})}\Big( \|u_k\|_{L^\infty(I, L^{2p})}^{p -1 -\theta}\|u_j\|_{L^\infty(I, L^{2p})}^{p -1- \theta}\big\|\|u_k\|_{L^{2p}}^{\theta} \|u_j\|_{L^{2p}}^{\theta}\big\|_{L^{\frac{4p}{4p - 2N(p - 1)}}} \\
&+&\|u_k\|_{L^\infty(I, L^{2p})}^{p - \theta}\|u_j\|_{L^\infty(I, L^{2p})}^{p -2-\theta}\big\|\|u_k\|_{L^{2p}}^{\theta} \|u_j\|_{L^{2p}}^{\theta}\big\|_{L^{\frac{4p}{4p - 2N(p - 1)}}} \Big)\\
&\lesssim& \|\nabla {\bf u}\|_{L^{\frac{4p}{N(p -1)}}(I, L^{2p})}\Big( \|u_k\|_{L^\infty(I, L^{2p})}^{p -1 -\theta}\|u_j\|_{L^\infty(I, L^{2p})}^{p -1- \theta}\|u_k\|_{L^{\frac{4p}{N(p -1)}}(I, L^{2p})}^{\theta} \|u_j\|_{L^{\frac{4p}{N(p -1)}}(I, L^{2p})}^{\theta} \\
&+&\|u_k\|_{L^\infty(I, L^{2p})}^{p - \theta}\|u_j\|_{L^\infty(I, L^{2p})}^{p -2-\theta}\|u_k\|_{L^{\frac{4p}{N(p -1)}}(I, L^{2p})}^{\theta} \|u_j\|_{L^{\frac{4p}{N(p -1)}}(I, L^{2p})}^{\theta} \Big)\\
&\lesssim& \|\nabla {\bf u}\|_{L^{\frac{4p}{N(p -1)}}(I, L^{2p})}\|{\bf u}\|_{L^\infty(I, L^{2p})}^{2(p -1 -\theta)}\|{\bf u}\|_{L^{\frac{4p}{N(p -1)}}(I, L^{2p})}^{2\theta}\\
&\lesssim& \|{\bf u}\|_{L^{\frac{4p}{N(p -1)}}(I, W^{1,2p})}^{\frac{4p}{N(p-1)}-1}\|{\bf u}\|_{L^\infty(I, L^{2p})}^{2p-\frac{4p}{N(p-1)}}\label{sct3}.
\end{eqnarray*}
\end{proof}{}
The next auxiliary result is about the decay of solution.
\begin{lem}\label{t1}
For any $2<r<\frac{2N}{N - 2},$ we have
$$\displaystyle\lim_{t\to \infty}\|{\bf u}(t)\|_{L^r}= 0.$$
\end{lem}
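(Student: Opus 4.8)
The plan is to show the decay by interpolating between an $L^2$ bound and a high Lebesgue norm that is controlled by the energy, after first upgrading the global solution to have finite $L^{\frac{4p}{N(p-1)}}(\R,W^{1,2p})$ norm; but since the previous lemma already gives the relevant scattering-type estimate, the cleanest route is to argue via the decay of the free flow together with the Duhamel formula. First I would recall that for the linear Schrödinger group one has the dispersive estimate $\|e^{it\Delta}\psi\|_{L^r}\lesssim |t|^{-N(\frac12-\frac1r)}\|\psi\|_{L^{r'}}$ for $2\le r<\infty$, hence $\|e^{it\Delta}\psi_j\|_{L^r}\to 0$ as $t\to\infty$ for any fixed $\psi_j\in L^2\cap L^{r'}$, and by density this extends to all $\psi_j\in H^1$ for $2<r<\frac{2N}{N-2}$ (using $H^1\hookrightarrow L^{r'}$ when $r'\le 2$, i.e. $r\ge 2$, and a standard $\varepsilon/3$ approximation argument). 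So the linear part $e^{it\Delta}\Psi$ contributes nothing in the limit.

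\smallskip

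Next I would estimate the Duhamel term. Write, componentwise,
$$u_j(t)=e^{it\Delta}\psi_j - i\sum_{k=1}^m\int_0^t e^{i(t-s)\Delta}\big(a_{jk}|u_k(s)|^p|u_j(s)|^{p-2}u_j(s)\big)\,ds,$$
and apply the dispersive estimate inside the integral. Splitting $[0,t]$ into $[0,t/2]$ and $[t/2,t]$, on the first piece the kernel $|t-s|^{-N(\frac12-\frac1r)}$ is bounded by $(t/2)^{-N(\frac12-\frac1r)}$ and the remaining time integral of the nonlinearity in $L^{r'}$ is finite because ${\bf u}\in L^{\frac{4p}{N(p-1)}}(\R,W^{1,2p})$ (this is exactly the integrability provided, via the previous lemma, by the fact that ${\bf u}$ is a global solution; combined with the uniform $H^1$ bound from energy conservation and the Strichartz/Hölder bookkeeping already carried out above, $|u_k|^p|u_j|^{p-1}\in L^{1}_{loc}$ in time with values in $L^{r'}$, and the full-line integral is finite in the relevant scattering norm). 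On the second piece $[t/2,t]$ one instead uses that the tail $\|{\bf u}\|_{L^{\frac{4p}{N(p-1)}}([t/2,\infty),W^{1,2p})}\to 0$ as $t\to\infty$, so the contribution is small uniformly in the kernel by Strichartz applied on $[t/2,t]$ (or $[t/2,\infty)$) together with the dual Strichartz/Hölder estimate for the nonlinearity, mirroring the computation in the preceding lemma. Summing over $j,k$ and using $a_{jk}=a_{kj}$ finite, both pieces tend to $0$.

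\smallskip

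The subtle point I expect to be the main obstacle is making the time-integrability of the nonlinearity in $L^{r'}$ rigorous for a general $r\in(2,\frac{2N}{N-2})$: one must choose the interpolation exponents so that the dual-Strichartz endpoint pairing closes, and this forces $p_*<p<p^*$ (the hypothesis in force) to guarantee both that $\frac{4p}{N(p-1)}$ is an admissible Strichartz exponent and that the Hölder splitting $|u_k|^p|u_j|^{p-1}$ distributes correctly between an $L^\infty_tL^{2p}$ factor (controlled by $H^1$ via Sobolev) and an $L^{\frac{4p}{N(p-1)}}_tL^{2p}$ factor (finite on $\R$). Once that bookkeeping is pinned down — it is essentially the same algebra as in the lemma just proved — the decay follows. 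I would present the argument by first disposing of $e^{it\Delta}\Psi$, then bounding the near-diagonal Duhamel tail by smallness of the global Strichartz norm on $[t/2,\infty)$, and finally the far part by the explicit $t^{-N(\frac12-\frac1r)}$ decay of the kernel against a fixed finite integral.
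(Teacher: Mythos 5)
There is a genuine gap here, and it is a circularity. Your argument leans at two crucial points on the finiteness of the global spacetime norm $\|{\bf u}\|_{L^{\frac{4p}{N(p-1)}}(\R,\,W^{1,2p})}$: once to make the time integral over $[0,t/2]$ of the nonlinearity in $L^{r'}$ finite, and once to say that the tail $\|{\bf u}\|_{L^{\frac{4p}{N(p-1)}}([t/2,\infty),\,W^{1,2p})}$ tends to zero. But at the point where Lemma \ref{t1} is proved, that global bound is not available: the hypothesis of Theorem \ref{t2} is only ${\bf u}\in C(\R,H)$, and the membership ${\bf u}\in \big(L^{\frac{4p}{N(p-1)}}(\R, W^{1,2p})\big)^{(m)}$ is part of the \emph{conclusion}. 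In the paper it is deduced \emph{after} Lemma \ref{t1}, by feeding the decay $\|{\bf u}(t)\|_{L^{2p}}\to 0$ into the estimate of the first lemma and closing with the absorption Lemma \ref{Bootstrap}. The first lemma by itself only bounds the Duhamel term in terms of $\|{\bf u}\|_{L^\infty(I,L^{2p})}$ and $\|{\bf u}\|_{L^{\frac{4p}{N(p-1)}}(I,W^{1,2p})}$; it does not assert that the latter is finite on all of $\R$. A symptom of the problem is that your proposal never invokes the Morawetz estimates of Proposition \ref{prop2''}, yet these are the only place the defocusing sign enters: a general global $H^1$ solution (e.g.\ a soliton for the focusing sign) has no $L^r$ decay, so no argument that uses only globality, energy conservation and dispersive/Strichartz bookkeeping can succeed.

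The paper's proof is structured precisely to avoid this loop. It argues by contradiction: if $\|u_j(t_n)\|_{L^{2+4/N}}\geq\epsilon$ along $t_n\to\infty$, a localized Gagliardo--Nirenberg inequality produces cubes $Q_1(x_n)$ carrying a fixed amount of $L^2$ mass; translating, extracting a weak limit, and using a perturbation claim (proved via Strichartz and Rellich) one upgrades this to a uniform lower bound $\|u_j(t)\|_{L^2(Q_2(x_n))}\geq\epsilon/4$ on time intervals $[t_n,t_n+T]$ of fixed length; summing these contributions contradicts the finiteness of the interaction Morawetz integral. If you want to keep a dispersive-estimate route, you would first have to extract from Morawetz a partition of $\R$ into finitely many intervals on which a suitable norm of ${\bf u}$ is small and run a perturbative argument on each to obtain the global Strichartz bound independently of the decay; as written, your proposal assumes what is to be proved.
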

\begin{proof}
Let $\chi \in C^\infty_0(\R^N)$ be a cut-off function and $\varphi_n:=(\varphi_1^n,...,\varphi_m^n)$ be a sequence in $H$ satisfying $\displaystyle\sup_{n}\|\varphi_n\|_{H}<\infty$ and
$$ \varphi_n\rightharpoonup \varphi := (\varphi_1,...,\varphi_m)\in H.$$
Let ${\bf u}_n:=(u_1^n,...,u_m^n)\; (\mbox{respectively}\; {\bf u}:=(u_1,...,u_m))$ be the solution in $C(\R, H)$ to \eqref{S} with initial data $\varphi_n\, (\mbox{respectively}\; \varphi).$ In what follows, we prove a claim.\\
{\bf Claim.}\\
 For every $\epsilon>0,$ there exist $T_\epsilon>0$ and $ n_\epsilon\in \N$ such that
 \begin{equation}\label{chi} \|\chi({\bf u}_n - {\bf u})\|_{L_{T_\epsilon}^\infty  (L^2)}<\epsilon \quad \mbox{for all}\; n>n_\epsilon.\end{equation}
Indeed, introducing the functions ${\bf v}_n=(v_1^n,..,v_m^n):= \chi {\bf u}_n$ and ${\bf v}=(v_1,..,v_m) :=\chi {\bf u}.$ We compute, $v_j^n(0,.)  = \chi \varphi_j^n$ and
$$i\dot v_j^n + \Delta v_j^n
 = \Delta\chi u_j^n + 2 \nabla\chi \nabla u_j^n + \chi\big(\displaystyle\sum_{k=1}^m|u_k^n|^p|u_j^n|^{p - 2}u_j^n\big).$$
Similarly, $v_j(0,.)=\chi\phi_j$ and
$$ i\dot v_j + \Delta v_j = \Delta\chi u_j + 2 \nabla \chi \nabla u_j +  \chi\big(\displaystyle\sum_{k=1}^m|u_k|^p|u_j|^{p - 2}u_j\big).$$
Denoting ${\bf w}_n:= {\bf v}_n - {\bf v}$ and ${\bf z}_n:= {\bf u}_n - {\bf u},$ we have
$$ i\dot w_j^n + \Delta w_j^n = \Delta\chi z_j^n+ 2 \nabla \chi \nabla z_j^n + \chi\big(\displaystyle\sum_{k=1}^m|u_k^n|^p|u_j^n|^{p - 2}u_j^n - \displaystyle\sum_{k=1}^m|u_k|^p|u_j|^{p - 2}u_j\big).$$
By Strichartz estimate, we obtain
\begin{eqnarray*}
\|{\bf w}_n\|_{L_T^\infty(L^2) \cap L^{\frac{4p}{N(p-1)}}_T(L^{2p})}&\lesssim& \|\chi(\varphi_n - \varphi)\|_{L^2} + \|\Delta\chi {\bf z}_n\|_{L^1_T(L^2)}+ 2 \|\nabla \chi \nabla {\bf z}_n\|_{L^1_T(L^2)}\\
&+&\displaystyle\sum_{j,k=1}^m\big\|\chi\big(|u_k^n|^p|u_j^n|^{p - 2}u_j^n - |u_k|^p|u_j|^{p - 2}u_j\big)\big\|_{L^{\frac{4p}{p(4-N) + N}}_T(L^{\frac{2p}{2p-1}})}.
\end{eqnarray*}
Thanks to the Rellich Theorem, up to subsequence extraction, we have
$$\epsilon:=\|\chi(\varphi_n - \varphi)\|_{L^2}\longrightarrow0\quad\mbox{as}\quad n\longrightarrow\infty.$$
Moreover, by the conservation laws via properties of $\chi$,
\begin{eqnarray*}
 \mathcal{I}_1
&:=&\|\Delta\chi {\bf z}_n\|_{L^1_T(L^2)}+ 2 \|\nabla\chi \nabla {\bf z}_n\|_{L^1_T(L^2)}\\
&\lesssim&\| {\bf z}_n\|_{L^1_T(L^2)}+  \| \nabla {\bf z}_n\|_{L^1_T(L^2)}\\
&\lesssim& CT,
\end{eqnarray*}
where $$ C:= \|{\bf u}\|_{L^\infty(\R,H)} + \|{\bf u}_n\|_{L^\infty(\R,H)} .$$
Arguing as previously, we have
\begin{eqnarray*}
\mathcal{I}_2&:=&\|\chi(|u_k^n|^p|u_j^n|^{p-2}u_j^n - |u_k|^p|u_j|^{p-2}u_j)\|_{L^{\frac{4p}{p(4 - N) + N}}_T(L^{\frac{2p}{2p -1}})}\\
&\lesssim&\|\chi(|u_k^n|^{p -1}|u_j^n|^{p-1} - |u_k|^p|u_j|^{p-2})|{u}_j^n - { u}_j|\|_{L^{\frac{4p}{p(4 - N) + N}}_T(L^{\frac{2p}{2p -1}})}\\
&\lesssim&\|\chi({\bf u}_n  - {\bf u})\|_{L^{\frac{4p}{p(4 - N) + N}}_T(L^{2p})}\Big( \|u_k^n\|_{L^\infty_T(L^{2p})} ^{p-1}\|u_j^n\|_{L^\infty_T(L^{2p})} ^{p-1}  + \|u_k\|_{L^\infty_T(L^{2p})} ^{p}\|u_j\|_{L^\infty_T(L^{2p})} ^{p-2}\Big)\\
&\lesssim&T^{\frac{4p - 2N(p-1)}{4p}}\|{\bf w}_n \|_{L^{\frac{4p}{N(p -1)}}_T(L^{2p})}\Big( \|u_k^n\|_{L^\infty_T(H^1)} ^{p-1}\|u_j^n\|_{L^\infty_T(H^1)} ^{p-1}  + \|u_k\|_{L^\infty_T(H^1)} ^{p}\|u_j\|_{L^\infty_T(H^1)} ^{p-2}\Big)\\
&\lesssim&T^{\frac{4p - 2N(p-1)}{4p}}\|{\bf w}_n \|_{L^{\frac{4p}{N(p -1)}}_T(L^{2p})}\Big( \|u_k^n\|_{L^\infty_T(H^1)} ^{2(p-1)} + \|u_j^n\|_{L^\infty_T(H^1)} ^{2(p-1)}  + \|u_k\|_{L^\infty_T(H^1)} ^{2p} + \|u_j\|_{L^\infty_T(H^1)} ^{2(p-2)}\Big)\\
&\lesssim&T^{\frac{4p - 2N(p-1)}{4p}}\|{\bf w}_n \|_{L^{\frac{4p}{N(p -1)}}_T(L^{2p})}.
\end{eqnarray*}
As a consequence
\begin{eqnarray*}
\|{\bf w}_n\|_{L_T^\infty(L^2) \cap L^{\frac{4p}{N(p-1)}}_T(L^{2p})}
&\lesssim& \epsilon + CT + T^{\frac{4p - 2N(p-1)}{4p}}\|{\bf w}_n \|_{L^{\frac{4p}{N(p -1)}}(L^{2p})}\\
&\lesssim& \frac{\epsilon + T}{1 - T^{\frac{4p - 2N(p-1)}{4p}}}.
\end{eqnarray*}
The claim is proved.\\
By an interpolation argument it is sufficient to prove the decay for $r:= 2 +\frac{4}{N}.$ We recall the following Gagliardo-Nirenberg inequality
\begin{equation}\label{GN}
\|u_j(t)\|_{L^{2 + \frac{4}{N}}}^{2 + \frac{4}{N}}\leq C \|u_j(t)\|_{H^1}^2
\Big(\displaystyle\sup_x \|u_j(t)\|_{L^2(Q_1(x))}\Big)^{\frac{4}{N}},
\end{equation}
where $Q_a(x)$ denotes the square centered at $x$ whose edge has length $a$.
We proceed by contradiction. Assume that there exist a sequence $(t_n)$ of positive real numbers and $\epsilon >0$ such that $\displaystyle\lim_{n\to \infty}t_n =\infty$ and
\begin{equation} \label{IN}\|u_j(t_n)\|_{L^{2 + \frac{4}{N}}}>\epsilon\quad \mbox{for all}\; n\in \N. \end{equation}
By \eqref{GN} and \eqref{IN}, there exist a sequence $(x_n)$ in $\R^N$ and a positive real number denoted also by $\epsilon>0$ such that
\begin{equation}\label{IN1}\|u_j(t_n)\|_{L^2(Q_1(x_n))}\geq\epsilon,\quad \mbox{for all}\; n\in \N. \end{equation}
Let $\phi_j^n(x):=u_j(t_n,x +x_n).$ Using the conservation laws, we obtain
$$ \sup_n\|\phi_j^n\|_{H^1}<\infty.$$
Then, up to a subsequence extraction, there exists $\phi_j\in H^1$ such that $\phi_j^n$ convergence weakly to $\phi_j$ in $H^1.$ By Rellich Theorem, we have
$$ \displaystyle\lim_{n\to \infty}\|\phi_j^n - \phi_j\|_{L^2(Q_1(0))}=0.$$
Moreover, thanks to \eqref{IN1} we have, $\|\phi_j^n\|_{L^2(Q_1(0))}\geq \epsilon.$ So, we obtain
$$\|\phi_j\|_{L^2(Q_1(0))}\geq \epsilon.$$
We denote by ${\bf \bar{u}}:=(\bar{u}_1,..,\bar{u}_m)\in C(\R, H)$ the solution of \eqref{S} with data $\phi:=(\phi_1,..,\phi_m)$ and ${\bar u^n}:=({u}_1^n,..,u_m^n)\in C(\R, H)$ the solution of \eqref{S} with data $\phi^n:=(\phi_1^n,..,\phi^n_m).$ Take a cut-off function $\chi \in C_0^\infty(\R^N)$ which satisfies $0\leq \chi\leq1,\; \chi=1$ on $Q_1(0)$ and $supp(\chi)\subset Q_2(0).$ Using a continuity argument, there exists $T>0$ such that
$$\displaystyle\inf_{t\in[0, T]}\|\chi \bar{u}_j(t) \|_{L^2(\R^N)}\geq \frac{\epsilon}{2}.$$
Now, taking account of the claim \eqref{chi}, there is a positive time denoted also $T$ and $n_\epsilon\in \N$ such that $$\|\chi(u_j^n - \bar{u}_j)\|_{L_T^\infty(L^2)}\leq \frac{\epsilon}{4}\quad \mbox{for all}\; n\geq n_\epsilon.$$
Hence, for all $t\in [0, T]$ and $n\geq n_\epsilon,$
$$ \|\chi u_j^n(t)\|_{L^2}\geq \|\chi \bar{u}_j(t)\|_{L^2} - \|\chi(u_j^n - \bar{u}_j)(t)\|_{L^2}\geq \frac{\epsilon}{4}.$$
Using a uniqueness argument, it follows that $u^n_j(t,x)=u_j(t+t_n,x+x_n)$. Moreover, by the properties of $\chi$ and the last inequality, for all $t\in[0, T]$ and $n\geq n_\epsilon,$
$$ \|u_j(t+t_n)\|_{L^2(Q_2(x_n))}\geq \frac{\epsilon}{4}.$$
This implies that
$$\|u_j(t)\|_{L^2(Q_2(x_n))}\geq \frac{\epsilon}{4},\quad \mbox{for all}\; t\in [t_n, t_n + T]\;\mbox{and all}\; n\geq n_\epsilon.$$
Moreover, as $\displaystyle\lim_{n\to \infty}t_n=\infty,$
 we can suppose that $t_{n +1}- t_n>T$ for $n\geq n_\epsilon.$ Therefore, thanks to Morawetz estimates \eqref{mrwtz1}, we get for $N\geq4,$ the contradiction
\begin{eqnarray*}
1 &\gtrsim&\displaystyle\int_0^\infty\displaystyle\int_{\R^N\times\R^N}\frac{|u_j(t,x)|^2|u_j(t,y)|^2}{|x - y|^3}\,dxdydt\\
 &\gtrsim&\displaystyle\sum_n\displaystyle\int_{t_n}^{t_{n} +T}\displaystyle\int_{Q_2(x_n)\times Q_2(x_n)}|u_j(t,x)|^2|u_j(t,y)|^2\,dxdydt\\
&\gtrsim& \displaystyle\sum_nT\big(\frac{\epsilon}{4}\big)^4 = \infty.
\end{eqnarray*}
Using \eqref{mrwtz2}, for $N=3$, write
\begin{eqnarray*}
1
&\gtrsim&\int_0^{\infty}\|u_j(t)\|_{L^4(\R^3)}^4dt\\
&\gtrsim&\sum_n\int_{t_n}^{t_n+T}\|u_j(t)\|_{L^4(Q_2(x_n))}^4dt\\
&\gtrsim&\sum_n\int_{t_n}^{t_n+T}\|u_j(t)\|_{L^2(Q_2(x_n))}^4dt\\
&\gtrsim&\sum_n(\frac\varepsilon4)^4T=\infty.
\end{eqnarray*}
Using \eqref{mrwtz3}, for $N=2$, write
\begin{eqnarray*}
1
&\gtrsim&\int_0^{\infty}\|u_j(t)\|_{L^8(\R^2)}^4dt\\
&\gtrsim&\sum_n\int_{t_n}^{t_n+T}\|u_j(t)\|_{L^8(Q_2(x_n))}^4dt\\
&\gtrsim&\sum_n\int_{t_n}^{t_n+T}\|u_j(t)\|_{L^2(Q_2(x_n))}^4dt\\
&\gtrsim&\sum_n(\frac\varepsilon4)^4T=\infty.
\end{eqnarray*}
This completes the proof of Lemma \ref{t1}.\\
Finally, we are ready to prove scattering. By the two previous lemmas we have
$$ \|{\bf u}\|_{(S(t,\infty))^{(m)}}\lesssim \|\Psi\|_{H} + \epsilon(t) \|{\bf u}\|_{(S(t,\infty))^{(m)}}^{\frac{4p}{N(p-1)}-1},$$
where $ \epsilon(t)\to 0, \; \mbox{as}\; t\to \infty.$ It follows from Lemma \ref{Bootstrap} via $p_*<p<p^*$, that
$${\bf u} \in (S(\R))^{(m)}.$$
Now, let ${\bf v}(t)= T(-t){\bf u}(t).$ Taking account of Duhamel formula
$${\bf v}(t)= \Psi + i\displaystyle\sum_{k=1}^m\displaystyle\int_0^t T(-t)\big(|u_k|^p|u_1|^{p-2}u_1,..,|u_k|^p|u_m|^{p-2}u_m \big)\, ds.$$
Thanks to \eqref{sct1} and \eqref{sct3},
$$f_{j,k}({\bf u})\in L^{\frac{4p}{p(4-N)+N}}(\R, W^{1, \frac{2p}{2p -1}}),$$
so, applying Strichartz estimate, we get for $0<t<\tau,$
\begin{eqnarray*}
\|{\bf v}(t) - {\bf v}(\tau)\|_{H}
&\lesssim&\displaystyle\sum_{j,k=1}^m\big\|f_{j,k}({\bf u}) \big\|_{L^{\frac{4p}{p(4-N)+N}}((t,\tau), W^{1, \frac{2p}{2p -1}})}\stackrel{t,\tau\rightarrow\infty}{\longrightarrow0}.
\end{eqnarray*}
Taking ${\bf u_\pm}:=\lim_{t\rightarrow\pm\infty}{\bf v}(t)$, we get
$$\lim_{t\rightarrow\pm\infty}\|{\bf u}(t)-T(t){\bf u_{\pm}}\|_{H}=0.$$
Scattering is proved.
\end{proof}{}
\section{Appendix}
In what follows we give a classical proof, inspired by \cite{cgt,cks}, of Proposition \ref{prop2''} about Morawetz estimates. Let ${\bf u}:=(u_1,...,u_m)\in H$ be solution to
$$i\dot u_j +\Delta u_j= \displaystyle\sum_{k=1}^{m}a_{jk}|u_k|^p|u_j|^{p-2}u_j $$
in $N_1$-spatial dimensions and ${\bf v}:=(v_1,...,v_m)\in H$ be solution to
$$i\dot v_j +\Delta v_j+ \displaystyle\sum_{k=1}^{m}a_{jk}|v_k|^p|v_j|^{p-2}v_j =0$$
in $N_2$-spatial dimensions. Define the tensor product ${\bf w}:= ({\bf u}\otimes{\bf v})(t,z)$ for $z$ in
$$  \R^{N_1 +N_2}:= \{ (x,y)\quad\mbox{s. t}\quad x\in \R^{N_1}, y\in \R^{N_2}\}$$
by the formula
$$ ({\bf u}\otimes{\bf v})(t,z) = {\bf u}(t,x){\bf v}(t,y) .$$
Denote $F({\bf u}):= \displaystyle\sum_{k=1}^{m}a_{jk}|u_k|^p|u_j|^{p-2}u_j.$ A direct computation shows that ${\bf w}:=(w_1,...,w_n)= {\bf u}\otimes{\bf v}$ solves the equation
\begin{equation}\label{tensor1}
i\dot  w_j +\Delta w_j- F({\bf u})\otimes v_j -  F({\bf v})\otimes u_j:=i\dot  w_j +\Delta w_j+ h=0
\end{equation}
where $\Delta:= \Delta_x + \Delta_y.$ Define the Morawetz action corresponding to ${\bf w}$ by
\begin{eqnarray*}
M_a^{\otimes_2}
&:=& 2\displaystyle\sum_{j=1}^m\displaystyle\int_{\R^{N_1}\times \R^{N_2}}\nabla a(z).\Im(\overline{u_j\otimes v_j(z)}\nabla (u_j\otimes v_j)(z))\,dz.
\end{eqnarray*}
where $\nabla: =(\nabla_x,\nabla_y).$ It follows from the equation \eqref{tensor1} that
\begin{gather*}
 \Im(\dot{  \bar{w}}_j\partial_i w_j) =\Re (-i\dot{  \bar{w}}_j\partial_i w_j)= - \Re \big((\Delta \bar{w}_j +\displaystyle\sum_{k=1}^{m}a_{jk}|\bar{u}_k|^p|\bar{u}_j|^{p-2}\bar{u}_j \bar{v}_j +\displaystyle\sum_{k=1}^{m}a_{jk}|\bar{v}_k|^p|\bar{v}_j|^{p-2}\bar{v}_j \bar{u}_j)\partial_i w_j\big);\\
 \Im( \bar{w}_j\partial_i\dot  w_j) =\Re (-i \bar{w}_j\partial_i\dot  w_j)=\Re \big(\partial_i(\Delta w_j +\displaystyle\sum_{k=1}^{m}a_{jk}|u_k|^p|u_j|^{p-2}u_j v_j +\displaystyle\sum_{k=1}^{m}a_{jk}|v_k|^p|v_j|^{p-2}v_j u_j) \bar{w}_j\big).
\end{gather*}
Moreover, denoting the quantity $ \big\{ h,w_j\big\}_p:=\Re \big(h\nabla\bar{w}_j - w_j\nabla\bar{h} \big)$, we compute
\begin{eqnarray*}
 \big\{ h,w_j\big\}_p^i
& = &\Re\Big[\partial_i\Big(\displaystyle\sum_{k=1}^{m}a_{jk}|\bar{u}_k|^p|\bar{u}_j|^{p-2}\bar{u}_j \bar{v}_j +\displaystyle\sum_{k=1}^{m}a_{jk}|\bar{v}_k|^p|\bar{v}_j|^{p-2}\bar{v}_j \bar{u}_j\Big) w_j\\
& -& \Big(\displaystyle\sum_{k=1}^{m}a_{jk}|u_k|^p|u_j|^{p-2}u_j v_j +\displaystyle\sum_{k=1}^{m}a_{jk}|v_k|^p|v_j|^{p-2}v_j u_j\Big) \partial_i\bar{w}_j\Big].
\end{eqnarray*}
Using the identity
$$\partial_k\Big(\partial^2_{i,k}(|w_j|^2)-4\Re(\partial_i\bar{w}_j\partial_kw_j)\Big)=2\Re\Big(\bar{w}_j\partial_i \Delta w_j - \partial_iw_j \Delta\bar{w}_j\Big),$$
it follows that, for any convex function $a$ 
\begin{eqnarray*}
\dot  M_a^{\otimes_2}
&=& 2\displaystyle\sum_{j=1}^m\displaystyle\int_{\R^{N_1}\times \R^{N_2}}\partial_i a\Re \big(\bar{w}_j\partial_i \Delta w_j - \partial_iw_j \Delta\bar{w}_j\big)\,dz  - 2\displaystyle\sum_{j=1}^m \displaystyle\int_{\R^{N_1}\times \R^{N_2}}\partial_i a \big\{h,w_j\big\}_p^i\,dz\\
&=&\displaystyle\sum_{j=1}^m\displaystyle\int_{\R^{N_1}\times \R^{N_2}}\partial_i a\partial_k\Big(\partial^2_{i,k}(|w_j|^2)-4\Re(\partial_i\bar{w}_j\partial_kw_j)\Big)\,dz  - 2\displaystyle\sum_{j=1}^m \displaystyle\int_{\R^{N_1}\times \R^{N_2}}\partial_i a \big\{h,w_j\big\}_p^i\,dz\\
&=&-\displaystyle\sum_{j=1}^m\displaystyle\int_{\R^{N_1}\times \R^{N_2}}(\Delta^2 a)|w_j|^2-4\Re(\partial^2_{i,k}a\partial_i\bar{w}_j\partial_kw_j)\Big)\,dz  - 2\displaystyle\sum_{j=1}^m \displaystyle\int_{\R^{N_1}\times \R^{N_2}}\partial_i a \big\{h,w_j\big\}_p^i\,dz\\
&\geq&-\displaystyle\sum_{j=1}^m\displaystyle\int_{\R^{N_1}\times \R^{N_2}}(\Delta^2 a)|w_j|^2\,dz  - 2\displaystyle\sum_{j=1}^m \displaystyle\int_{\R^{N_1}\times \R^{N_2}}\partial_i a \big\{h,w_j\big\}_p^i\,dz.
\end{eqnarray*}
Then
\begin{eqnarray*}
\sup_{[0,T]}|M_a^{\otimes_2}|
&\geq& \displaystyle\sum_{j=1}^m \displaystyle\int_0^T \displaystyle\int_{\R^{N_1}\times \R^{N_2}}\Big((-\Delta^2a)|u_jv_j|^2 +4(1 - \frac{1}{p})\Delta_x a\displaystyle\sum_{k=1}^ma_{jk}|u_k|^p|u_j|^p|v_j|^2 \\
&+& 4(1 - \frac{1}{p})\Delta_y a\displaystyle\sum_{k=1}^ma_{jk}|v_k|^p|v_j|^p|u_j|^2 \Big)\,dz\,dt\\
&\geq& \displaystyle\sum_{j=1}^m \displaystyle\int_0^T \displaystyle\int_{\R^{N_1}\times \R^{N_2}}(-\Delta^2a)|u_jv_j|^2\,dz\,dt.
\end{eqnarray*}
Now, take $a(z):=a(x,y) = |x - y|$, where $(x,y)\in\R^{N}\times \R^{N}.$ Then, direct calculation yields
$$-\Delta^2 a =\left\{\begin{array}{ll}
C_1\delta(x-y),&\mbox{if}\quad N=3;\\
C_2|x-y|^{-3},&\mbox{if}\quad N\geq4.
\end{array}\right.$$
When $N =3$, choosing $u_j = v_j,$ we get
$$ \displaystyle\sum_{j=1}^m \displaystyle\int_0^T \displaystyle\int_{\R^3} |u_j(t,x)|^4\,dx\,dt\lesssim \displaystyle\sup_{[0,T]}|M_a^{\otimes_2}|.$$
If $N>3$, it follows that
$$ \displaystyle\sum_{j=1}^m \displaystyle\int_0^T\displaystyle\int_{\R^{N}\otimes \R^{N}}\frac{|u_j(t,x)|^2|u_j(t,y)|^2}{|x - y|^3}\,dx\,dy\,dt\lesssim \displaystyle\sup_{[0,T]}|M_a^{\otimes_2}|. $$
For $N=2$, the proof follows by taking $a(x,y)$ like in \cite{cgt}.\\


\end{document}